\DeclareMathAlphabet{\mathitbf}{OML}{cmm}{b}{it}
\def\jmath{j}
\newcommand{\dotprod}[2]{ \left \langle #1, #2 \right \rangle }
\def\eps{\varepsilon}
\def\nin{\hspace{0.22em}/ \hspace{-0.88em} \in}       
\def\bSigma{\bm{\Sigma}}
\def\u{\bm{u}}
\def\U{\bm{U}}
\def\uh{\mathbf{u}}
\def\f{\mathbf{f}}
\def\K{\mathbf{K}}
\def\BDelta{\bm{\Delta}}
\def\I{\mathbb{I}}
\def\J{\mathbb{J}}
\def\Js{\mathcal{J}}
\newcommand{\cov}{\mbox{cov}}
\newcommand{\var}{\mbox{var}}
\def\thetab{\bm{\theta}}
\def\alphab{\bm{\alpha}}
\def\betab{\bm{\beta}}
\def\pb{\bm{p}}
\newtheorem{theorem}{Theorem}
\newtheorem{lemma}[theorem]{Lemma}
\newtheorem{definition}[theorem]{Definition}
\newtheorem{statement}[theorem]{Statement}
\newtheorem{remark}[theorem]{Remark}
\date{March 11, 2015}
\begin{document}

\title{Polynomial Chaos Expansion of random coefficients and the solution of stochastic partial differential equations in the Tensor Train format}

\author{Sergey Dolgov\footnotemark[2] \and Boris N. Khoromskij\footnotemark[3] \and Alexander Litvinenko\footnotemark[4] \and Hermann G. Matthies\footnotemark[5]}



%
\maketitle

\thispagestyle{plain}

\renewcommand{\thefootnote}{\fnsymbol{footnote}}

\footnotetext[2]{E-mail: {\tt dolgov@mpi-magdeburg.mpg.de}. Max-Planck-Institut f\"ur Dynamik komplexer technischer Systeme, Sandtorstr. 1, 39106 Magdeburg, Germany.}
\footnotetext[3]{E-mail: {\tt bokh@mis.mpg.de}. Max-Planck-Institut f\"ur Mathematik in den Naturwissenschaften,  Inselstra\ss e 22, 04103 Leipzig, Germany.}
\footnotetext[4]{E-mail: {\tt alexander.litvinenko@kaust.edu.sa}. King Abdullah University of Science and Technology (KAUST), Thuwal, Saudi Arabia.}
\footnotetext[5]{E-mail: {\tt wire@tu-bs.de}. Institute for Scientific Computing, Technische Universit\"at Braunschweig, Hans-Sommerstr. 65, Brunswick, Germany.}

\renewcommand{\thefootnote}{\arabic{footnote}}

\begin{center}
\textbf{\small{
This is a major revision of the manuscript \url{http://arxiv.org/abs/1406.2816}.
We have significantly extended the numerical experiments, adding
the comparison of cross algorithms,
verification via the Monte Carlo method,
computation of the exceedance probabilities,
the log-normally distributed coefficient,
and the systematic study of the performance w.r.t. different parameters of the stochastic PDE (correlation length, variance, etc.).
Some unused material is removed.
}}
\end{center}

\begin{abstract}
We apply the Tensor Train (TT) decomposition to construct the tensor product Polynomial Chaos Expansion (PCE) of a random field, to solve the stochastic elliptic diffusion PDE with the stochastic Galerkin discretization, and to compute some quantities of interest (mean, variance, exceedance probabilities).
We assume that the random diffusion coefficient is given as a smooth transformation of a Gaussian random field.
In this case, the PCE is delivered by a complicated formula, which lacks an analytic TT representation.
To construct its TT approximation numerically, we develop the new block TT cross algorithm, a method that computes the whole TT decomposition from a few evaluations of the PCE formula.
The new method is conceptually similar to the adaptive cross approximation in the TT format, but is more efficient when several tensors must be stored in the same TT representation, which is the case for the PCE.
Besides, we demonstrate how to assemble the stochastic Galerkin matrix and to compute the solution of the elliptic equation and its post-processing, staying in the TT format.

We compare our technique with the traditional sparse polynomial chaos and the Monte Carlo approaches.
In the tensor product polynomial chaos, the polynomial degree is bounded for each random variable independently.
This provides higher accuracy than the sparse polynomial set or the Monte Carlo method, but the cardinality of the tensor product set grows exponentially with the number of random variables.
However, when the PCE coefficients are implicitly approximated in the TT format, the computations with the full tensor product polynomial set become possible.
In the numerical experiments, we confirm that the new methodology is competitive in a wide range of parameters, especially where high accuracy and high polynomial degrees are required.
\end{abstract}

{\par \it Keywords:~}{uncertainty quantification, polynomial chaos expansion, Karhunen-Lo\`{e}ve expansion, stochastic Galerkin, tensor product methods, tensor train format, adaptive cross approximation, block cross}

{\par \it MSC2010:~}{
15A69,  
65F10,  
60H15,  
60H35,  
65C30  
}

\section{Motivation}
\label{sec:1}

Situations in which one is concerned with uncertainty quantification often come in the following guise: we are investigating physical models where inputs are not given precisely, but instead are random quantities or random fields, or depend on a set of parameters.
A classical example is the Darcy flow model with a random diffusion coefficient,
\begin{equation}
\label{eq:elliptic}
-\nabla \kappa(x,\omega) \nabla u(x,\omega) = f(x,\omega), \quad x \in D\subset \mathbb{R}^d,
\end{equation}
where $d$ is the spatial dimension, $\kappa(x,\omega)$ is a random field dependent on a random
parameter $\omega$ in a probability space $(\Omega,\mathcal{A},\mathcal{P})$.
The solution $u(x,\omega)$ belongs to $H^1(D)$ w.r.t. $x$ and the same probability space w.r.t. $\omega$.
There is an established theory about the existence and uniqueness of the solution to \eqref{eq:elliptic} under various assumptions on $\kappa$ and $f$; see, for example, 
\cite{babuska2004galerkin,Sarkis09,GITTELSON10,matthiesKeese05cmame,mugler2011elliptic}.

In \cite{Sarkis09, GITTELSON10} it is shown that under additional
assumptions on the right-hand side $f$ and special choices of the test space the problem \eqref{eq:elliptic} is well-posed.
The case where the Lax-Milgram theorem is not applicable (e. g. upper and lower constants $\underline{\kappa}$, $\overline{\kappa}$ in
$0<\underline{\kappa}<\kappa<\overline{\kappa}<\infty$ do not exist)
is also considered in \cite{mugler2011elliptic}.
In \cite{Sarkis09} the authors analyze assumptions on $\kappa$ which were made in \cite{babuska2004galerkin} to guarantee the uniqueness and the existence of the solution and to
offer a new method with much weakened assumptions. Additionally, in \cite{Sarkis09}, the authors point out that after truncating terms in the expansion for $\kappa$,
as is done in \cite{matthiesKeese05cmame}, it is not guaranteed that the truncated $\kappa$ will be strictly bounded from zero. As a result the existence of
the approximate solution to \eqref{eq:elliptic} is questionable. The remarkable difference of the approach in \cite{Sarkis09} from approaches in
\cite{babuska2004galerkin, matthiesKeese05cmame} is that the permeability coefficient $\kappa$ is not truncated and, as a result, the ellipticity condition is maintained.

To solve \eqref{eq:elliptic} we need to discretize the random field  $\kappa$.
This requires some knowledge on the probability space $\Omega$ and the probability measure.
A widely used approach relies on two assumptions: $\kappa$ is defined as an invertible smooth function of another random field with known distribution (e.g. normal), and the covariance function of either of those fields is given.
After that $\kappa(x,\omega)$ can be \emph{expanded} to a series of functions, depending on $x$ and a set of new parameters $\thetab=(\theta_1,\theta_2,\ldots)$.
Typically used are polynomials of $\thetab$, hence called the \emph{Polynomial Chaos Expansion} (PCE) \cite{wiener38, xiuKarniadakis02a} family of discretizations.
Another approach is the collocation on some grid in $\thetab$  \cite{Babushka_Colloc,back2011stochastic}.
Each of $\thetab$ is a random quantity depending on $\omega$, but the domain of $\thetab$ is known deterministically.
Introducing a discretization for $\thetab$, we turn the stochastic problem \eqref{eq:elliptic} into a high-dimensional deterministic one.
However, its straightforward numerical solution suffers from the \emph{curse of dimensionality}: even if $\thetab$ contains a finite amount of variables (say, $M$), the number of discrete degrees of freedom grows exponentially with $M$.

To surmount this issue, a data-sparse approximation is needed.
During the last years, low-rank tensor product techniques were successfully applied to the solution of high-dimensional stochastic and parametric PDEs.
A recent literature survey of low-rank tensor approximation techniques can be found in \cite{Grasedyck_Kressner}.
The tensor product approach involves a \emph{format}, in which the data are represented, and a set of \emph{algorithms} to manipulate the data in the format.
The algorithms can be roughly separated into three classes:
  methods performing basic approximate algebra (additions, products, simple iterative methods) of data within the format;
  methods constructing the formatted representation from a few entries of the initial data;
  and methods aiming to solve equations, e.g. linear systems, ODEs or eigenvalue problems, keeping all data in the format.

To some extent, these methods have already been applied to parametric problems.
Non-intrusive (black box) tensor methods for multi-parametric problems, i.e. ``class 2'', were developed in \cite{Ballan_Grasedyck14, Ballan_Grasedyck10, doostan-non-intrusive-2013}.
In particular, in \cite{Ballan_Grasedyck14} the authors follow the stochastic collocation approach and compute functionals of the solution of multi-parametric PDEs.
Since the stochastic collocation allows to solve uncoupled deterministic problems for different collocation points, the functional of the solution (e.g. the average value) can be approximated straightforwardly via the black box hierarchical tensor interpolation algorithm.
To compute the whole stochastic solution is a more difficult problem, especially in the stochastic Galerkin framework, where deterministic problems are coupled.

In \cite{KressTobler11, KressTobler11HighDim, matthieszander-lowrank-2012, Sousedik2014truncated, ZanderDiss} the authors develop iterative methods and preconditioners to solve numerically discretized multi-parametric problems.
Several manipulations of the PCE with a low-rank approximation have been considered.
In \cite{litvinenko-spde-2013} the authors assume that the solution has a low-rank \emph{canonical} (CP) tensor format and develop methods for the CP-formatted computation of level sets.
In \cite{UQLitvinenko12,wahnert-stochgalerkin-2014} the authors analyzed tensor ranks of the stochastic operator.
The proper generalized decomposition was applied for solving high dimensional stochastic problems in \cite{Nouy07,nouy-pgd-stoch-2010}.
In \cite{khos-pde-2010, khos-qtt-2010, KhSch-Galerkin-SPDE-2011} the authors employed newer tensor formats, the Tensor Train (TT) and Quantized TT (QTT), for the
approximation of coefficients and the solution of stochastic elliptic PDEs.
The theoretical study of the complexity of the stochastic equation was provided, for example, by means of the analytic regularity and (generalized) PC approximation \cite{xiuKarniadakis02a} for control problems constrained by linear parametric elliptic and parabolic PDEs \cite{kunoth-2013}.

Other classical techniques to cope with high-dimensional problems are sparse grids
\cite{Griebel,Griebel_Bungartz, nobile2008sparse}
and (quasi) Monte Carlo methods \cite{graham-QMC-2011,scheichl-mlmc-further-2013,Schwab-MLQMC-2015}.
Nevertheless, tensor product methods are more flexible than sparse grids, as they allow to avoid severe reductions of the model from the very beginning, and
adapt a suitable structure on the discrete level.
Compared to Monte Carlo methods, tensor techniques work implicitly with the whole solution, and even a construction of a tensor format for entry-wise given data in a black box manner uses less randomness than the MC approach.

In this article we approximate the PCE of the input coefficient $\kappa(x,\omega)$ in the TT tensor format. After that we compute the solution $u(x,\omega)$ and perform all post-processing in the same TT format.
The first stage, computation of the PCE of $\kappa$, involves a lengthy formula, defining each entry of the discretized coefficient.
To perform this computation efficiently, we develop a new \emph{block cross} approximation algorithm, which constructs the TT format for $\kappa$ from a few evaluations of the entry-wise formula.
This formula delivers several tensors that are to be summed and approximated in a TT format.
We show that the new algorithm is more efficient than several runs of a previously existing cross method \cite{so-dmrgi-2011proc} for each tensor separately.
As soon as the coefficient is given in the TT format, it becomes very easy to construct the stiffness matrix, derived from the stochastic Galerkin discretization of \eqref{eq:elliptic}.
We apply the alternating iterative tensor algorithm to solve a large linear system arising from \eqref{eq:elliptic}, and finally use the cross algorithm again to compute the exceedance probability from the solution.

In the next section, we outline the general Galerkin, Polynomial Chaos Expansion (PCE) and Karhunen-Lo\`{e}ve Expansion (KLE) discretization schemes for a random field.
An introduction to the TT methods and the new block cross interpolation algorithm are presented in Section \ref{sec:TTlow}.
Some details of how to apply the block cross algorithm to the PCE calculations are given in Section \ref{sec:tt-kle-pce}.
We start with the TT approximation of the multi-dimensional input coefficient $\kappa$.
After that, in Section \ref{sec:stiff_tt}, we construct the stochastic Galerkin matrix in the TT format.
Section \ref{subsec:levelSets} is devoted to the efficient post-processing (computation of the mean value, covariance, and probability of a particular event) in the TT format.
Numerical results in Section \ref{sec:numerics} demonstrate the practical performance of the TT approach in the outlined tasks.

\section{Discretisation and computation}
\label{SS:discret}
For brevity, we follow \cite{Matthies_encycl}, where more references may be found.
See also the recent monograph \cite{LeMatre10} to study more about KLE, PCE and multiindices.
In \cite{ ErnstU10, Sousedik2014truncated, Ullmann10, ullmann2012efficient} the authors discuss the stochastic Galerkin matrix, its sparsity and preconditioners.

To discretize \eqref{eq:elliptic}, we follow the Galerkin approach.
The Finite Element method (for example, with piecewise linear functions) is a natural way to discretize the spatial part.
We choose a finite dimensional subspace
\begin{equation}
\mathcal{V}_N = \mathrm{span}\{\varphi_1(x),\ldots,\varphi_N(x)\} \subset \mathcal{V},
\label{eq:V_N}
\end{equation}
where $\mathcal{V} = H^1(D) \cap L_2^0(\bar D)$ is the Hilbert space of functions on $x$.
For simplicity, we impose the homogeneous Dirichlet boundary conditions.

Discretization in the probability space $(\Omega, \mathcal{A},\mathcal{P})$ is less trivial.
We use the Karhunen-Lo\`{e}ve Expansion (KLE) to determine a finite set of independent parameters, defining $\kappa(x,\omega)$.
However, the distribution of these parameters might be unknown, and special efforts are needed to resolve this.

\subsection{Discretization of the input random field}
We assume that $\kappa(x,\omega)$ may be seen as a smooth transformation $\kappa = \phi(\gamma)$ of the Gaussian random field $\gamma(x,\omega)$.
In this Section, we explain how to compute KLE of $\gamma$ if the covariance of $\kappa$ is given.
For more details see \cite[Sections 3.4.1, 3.4.2]{ZanderDiss} or \cite{KeeseDiss}.

A typical example is the log-normal field with $\phi(\gamma) = \exp(\gamma)$.
Expanding $\phi$ in a series of the Hermite polynomials gives
\begin{equation}
\phi(\gamma) = \sum\limits_{i=0}^{\infty} \phi_i h_i(\gamma) \approx \sum\limits_{i=0}^{Q} \phi_i h_i(\gamma), \quad \phi_i = \int\limits_{-\infty}^{+\infty} \phi(z) \frac{1}{i!} h_i(z) \exp(-z^2/2) dz,
\label{eq:phi_tranform}
\end{equation}
where $h_i(\cdot)$ is the $i$-th Hermite polynomial, and $Q$ is the number of terms after the truncation.

The Gaussian field $\gamma(x,\omega)$ can be written as the Karhunen-Lo\`{e}ve expansion.
First, given the covariance function of $\kappa(x,\omega)$, we may relate it with the covariance function of $\gamma(x,\omega)$ as follows (see details in \cite[Sections 3.4.1, 3.4.2]{ZanderDiss}),
\begin{equation}
\cov_{\kappa}(x,y) = \int_{\Omega} \left(\kappa(x,\omega)-\bar{\kappa}(x)\right) \left(\kappa(y,\omega)-\bar{\kappa}(y)\right) dP(\omega) \approx \sum_{i=0}^{Q} i! 
\phi_i^2 \cov_{\gamma}^i (x,y),
\label{eq:cov_transform}
\end{equation}
where $\bar{\kappa}(x)$ is the expectation of $\kappa(x,\omega)$.
Solving this implicit $Q$-order equation, we derive $\cov_{\gamma} (x,y)$ \cite{ZanderDiss}.
Now, the KLE can be computed as follows:
\begin{equation}
\gamma(x,\omega) = \sum_{m=1}^{\infty} g_m(x) \theta_m(\omega), \quad \mbox{where} \quad \int\limits_{D} \cov_{\gamma}(x,y) g_m(y)dy = \lambda_m g_m(x).
\end{equation}
Here we assume that the eigenfunctions $g_m$ absorb the square roots of the KL eigenvalues. The stochastic variables $\theta_m$ are normalized (they are uncorrelated and jointly Gaussian).

The initial coefficient $\kappa$ depends nonlinearly on $\theta_m$.
In the discrete setting, we truncate PCE and write it for $M$ random variables,
\begin{equation}
\kappa(x,\omega) \approx \sum\limits_{\alphab \in \Js_M} \kappa(x,\alphab) H_{\alphab}(\thetab(\omega)), \quad \text{where}\quad H_{\alphab}(\thetab):=h_{\alpha_1}(\theta_1) \cdots h_{\alpha_M}(\theta_M)
\label{eq:pce_k}
\end{equation}
is the multivariate Hermite polynomial, $\alphab = (\alpha_1,\ldots,\alpha_M)$ is a multiindex (a tuple of multinomial orders), $h_{\alpha_m}(\theta_m)$ is the univariate Hermite polynomial, $\thetab = (\theta_1,\ldots,\theta_M)$ is a tuple of random variables,
and $\Js_M$ is a set of all multinomial orders (see definition below).
The Galerkin coefficients $\kappa(x,\alphab)$ are evaluated as follows:
\begin{equation}
\kappa(x,\alphab) = \dfrac{(\alpha_1+\cdots+\alpha_M)!}{\alpha_1! \cdots \alpha_M! } \phi_{\alpha_1+\cdots+\alpha_M} \prod_{m=1}^{M} g_m^{\alpha_m}(x),
\label{eq:pce_k_coeffs}
\end{equation}
where $\phi_{\alpha_1+\cdots+\alpha_M}$ is the Galerkin coefficient of the transform function in \eqref{eq:phi_tranform}
and $g_m^{\alpha_m}(x)$ means just the $\alpha_m$-th power of the KLE function value $g_m(x)$.

In practice, we restrict the polynomial orders in \eqref{eq:pce_k} to finite limits, which can be done in different ways.

\begin{definition}
\label{def:tensor_product}
The \emph{full} multi-index set is defined by restricting each component independently,
$$
\Js_{M,\pb} = \left\{\alphab=(\alpha_1,\ldots,\alpha_M):~~\alpha_m = 0,\ldots,p_m,~~m=1,\ldots,M \right\},
$$
where $\pb=(p_1,\ldots,p_M)$ is a shortcut for the tuple of order limits.
\end{definition}

The full set provides high flexibility for the resolution of stochastic variables 
\cite{wahnert-stochgalerkin-2014, litvinenko-spde-2013, Sousedik2014truncated}.
However, its cardinality is equal to $\prod_{m=1}^M (p_m+1) \le (p+1)^M$, if $p_m\le p$.
This may become a enormous number even if $p$ and $M$ are moderate ($p \sim 3$, $M \sim 20$ is typical).
In this paper, we do not store all $(p+1)^M$ values explicitly, but instead approximate them via a \emph{tensor product} representation.

Another approach is to preselect the set of polynomials with the moderate cardinality \cite{TodorSCA, matthiesKeese05cmame, Sudret_sparsePCE, chkifa-adapt-stochfem-2015}.
\begin{definition}
The \emph{sparse} multi-index set is defined by restricting the sum of components,
$$
\Js_{M,p}^{sp} = \left\{\alphab=(\alpha_1,\ldots,\alpha_M):\quad \alphab \ge 0,~~\alpha_1+\cdots+\alpha_M \le p \right\}.
$$
\end{definition}

The sparse set contains $\mathcal{O}\left(\frac{M!}{p!(M-p)!}\right) = \mathcal{O}(M^p)$ values if $M \gg p$,
which is definitely less than $(p+1)^M$.
However, the negative side is that for a fixed $p$ some variables are resolved worse than others, and the approximation accuracy may suffer.
It may also be harmful to increase $p$ since in the sparse set it contributes exponentially to the complexity.

The low-rank tensor approximation allows to reduce the storage of the coefficient with the full set $\Js_{M,\pb}$ from $\mathcal{O}(p^M)$ to $\mathcal{O}(M p r^2)$, where $r$ is a data-dependent \emph{tensor rank}.
Theoretical estimates of $r$ are under development; numerical studies reflect that often $r$ does not depend on $p$ and depends linearly (or even milder) on $M$ \cite{khos-pde-2010}.
When $M$ is moderate, and $p$ is relatively large, the low-rank approach with the full index set (Def. \ref{def:tensor_product}) becomes preferable.
Besides, as soon as the coefficient $\kappa$ is computed in a tensor product form, to assemble the Galerkin matrix of the stochastic PDE is a much easier task than with the sparse set.

Some complexity reduction in Formula \eqref{eq:pce_k_coeffs} can be achieved with the help of the KLE for the initial field $\kappa(x,\omega)$.
Consider the expansion
\begin{equation}
\kappa(x,\omega) = \bar\kappa(x) + \sum_{\ell=1}^{\infty} \sqrt{\mu_\ell} v_\ell(x) \eta_\ell(\omega)
\approx  \bar\kappa(x) + \sum_{\ell=1}^{L} \sqrt{\mu_\ell} v_\ell(x) \eta_\ell(\omega),
\label{eq:k_kle}
\end{equation}
where $v_\ell(x)$ are eigenfunctions of the integral operator with the covariance as the kernel (see e.g. \cite{ZanderDiss, KeeseDiss, matthiesKeese05cmame}).
It's hard to work with \eqref{eq:k_kle} straightforwardly, since the distribution of $\eta_\ell$ is generally unknown.
But we know that the set $V(x) = \{v_\ell(x)\}_{\ell=1}^{L}$, where $L$ is the number of KLE terms after the truncation, serves as an optimally reduced basis.
Therefore, instead of using \eqref{eq:pce_k_coeffs} directly, we project it onto $V(x)$:
\begin{equation}
\bm{\tilde \kappa}_{\ell}(\alphab) = \dfrac{(\alpha_1+\cdots+\alpha_M)!}{\alpha_1! \cdots \alpha_M! } \phi_{\alpha_1+\cdots+\alpha_M} \int\limits_{D} \prod_{m=1}^{M} g_m^{\alpha_m}(x) v_\ell(x) dx.
\label{eq:k_kle_pce_coeff}
\end{equation}
Note that the range $\ell=1,\ldots,L$ may be much smaller than the number of discretization points $N$ in $D$.
After that, we restore the approximate coefficients \eqref{eq:pce_k_coeffs}:
\begin{equation}
\kappa(x,\alphab) \approx \bar\kappa(x) + \sum\limits_{\ell=1}^{L} v_\ell(x) \bm{\tilde \kappa}_{\ell}(\alphab).
\label{eq:k_kle_pce}
\end{equation}

\subsection{Discretization of the stochastic operator equation}
The same PCE ansatz of the coefficient \eqref{eq:pce_k} may be adopted to discretize the solution $u$ and ultimately the whole initial problem \eqref{eq:elliptic},
see \cite{wahnert-stochgalerkin-2014, litvinenko-spde-2013}.
For brevity, we illustrate the procedure on the deterministic right-hand side $f=f(x)$.

Given the KLE components \eqref{eq:k_kle} and the spatial discretization basis \eqref{eq:V_N},
we first assemble the spatial Galerkin matrices,
\begin{equation}
\bm{K}_0^{(x)}(i,j) = \int\limits_{D} \bar\kappa(x) \nabla \varphi_i(x) \cdot \nabla \varphi_j(x) dx, \quad \bm{K}_\ell^{(x)}(i,j) = \int\limits_{D} v_\ell(x) \nabla \varphi_i(x) \cdot \nabla \varphi_j(x) dx,
\label{eq:K_l}
\end{equation}
for $i,j=1,\ldots,N$, $\ell=1,\ldots,L$.
Now we take into account the PCE part $\bm{\tilde \kappa}_{\alphab}$.
Assuming that $u$ is decomposed in the same way as \eqref{eq:pce_k} with the same $\Js_{M,\pb}$ or $\Js_{M,p}^{sp}$, we compute the integral in \eqref{eq:K_p} over stochastic coordinates $\thetab$ and 
compute the stochastic parts $\bm{K}_{\ell}^{(\omega)}\in \mathbb{R}^{\#\Js_M \times \# \Js_M}$ of the Galerkin matrix as follows (see also \cite{KeeseDiss, ZanderDiss, Matthies_encycl}),
\begin{equation}
\bm{K}_{\ell}^{(\omega)}(\alphab,\betab) = \int\limits_{\mathbb{R}^M} H_{\alphab}(\thetab) H_{\betab}(\thetab) \sum\limits_{\bm{\nu} \in \Js_{M}} 
\bm{\tilde\kappa}_{\ell}(\bm{\nu}) H_{\nu}(\theta) \rho(\thetab)d\thetab = \sum\limits_{\nu \in \Js_{M}} \BDelta_{\alphab,\betab,\bm{\nu}} \bm{\tilde\kappa}_{\ell}(\bm{\nu}),
\label{eq:K_p}
\end{equation}
where $\rho(\thetab) = \rho(\theta_1) \cdots \rho(\theta_M)$ is the probability density with $\rho(\theta_m) = \frac{1}{\sqrt{2\pi}} \exp(-\theta_m^2/2)$, and
\begin{equation}
\BDelta_{\alphab,\betab,\bm{\nu}} =  \Delta_{\alpha_1,\beta_1,\nu_1} \cdots \Delta_{\alpha_M,\beta_M,\nu_M}, \quad  \Delta_{\alpha_m,\beta_m,\nu_m} = 
\int\limits_{\mathbb{R}} h_{\alpha_m}(\theta) h_{\beta_m}(\theta) h_{\nu_m}(\theta) \rho(\theta) d\theta,
\label{eq:Delta}
\end{equation}
is the triple product of the Hermite polynomials, and $\bm{\tilde\kappa}_{\ell}(\bm{\nu})$ is according to \eqref{eq:k_kle_pce_coeff}.
Let us denote $\BDelta_0(\alphab,\betab) = \Delta_{\alpha_1,\beta_1,0} \cdots \Delta_{\alpha_M,\beta_M,0}$, i.e. $\BDelta_0 \in \mathbb{R}^{\#J_M \times \# J_M}$.
Putting together \eqref{eq:k_kle_pce}, \eqref{eq:K_l} and \eqref{eq:K_p}, we obtain the whole discrete stochastic Galerkin matrix,
\begin{equation}
\K = \bm{K}_0^{(x)} \otimes \BDelta_0 + \sum\limits_{\ell=1}^L \bm{K}_\ell^{(x)} \otimes \bm{K}_{\ell}^{(\omega)},
\label{eq:K_gen}
\end{equation}
which is a square matrix of size $N \cdot \#\Js_{M}$, and $\otimes$ is the Kronecker product.

For the sparse index set, we need to compute $\mathcal{O}\left((\#\Js_{M,p}^{sp})^3\right)$ entries of $\BDelta$ explicitly.
For the full index set and $\bm{\tilde \kappa}_{\bm{\nu}}$ given in the tensor format, the direct product in $\BDelta$ \eqref{eq:Delta} allows to exploit the same format for \eqref{eq:K_gen} and to simplify the procedure, see Sec. \ref{sec:stiff_tt}.

The deterministic right-hand side is extended to the size of $\K$ easily,
\begin{equation}
\f = \bm{f}_0 \otimes \bm{e}_{0}, \qquad \bm{f}_0(i) = \int\limits_{D} \varphi_i(x) f(x)dx, \quad i=1,\ldots,N,
\label{eq:f}
\end{equation}
and $\bm{e}_0$ is the first identity vector of size $\#\Js_{M}$, $\bm{e}_0 = (1,0,\ldots,0)^\top$, which assigns the deterministic $f(x)$ to the zeroth-order Hermite polynomial in the parametric space.

\subsection{Solution of the stochastic equation}
Now the discrete equation writes as a linear system
\begin{equation}
\K \uh = \f,
\label{eq:linsystem}
\end{equation}
where $\uh\in\mathbb{R}^{N \cdot \#\Js_{M}}$ is the vector of the Galerkin coefficients for the solution, with elements enumerated by $u(i,\alphab)$.

In \cite{Sousedik2014truncated} the authors propose two new strategies for constructing preconditioners for the stochastic Galerkin system to be used with Krylov subspace
iterative solvers. The authors also research the block sparsity structure of the corresponding coefficient matrix as well as the stochastic Galerkin matrix.
In \cite{Ullmann10, ullmann2012efficient} the authors develop and analyze a Kronecker-product preconditioner.

In many cases it is sufficient to use the mean-field preconditioner $\bm{K}_0^{(x)} \otimes \BDelta_0$, which is easy to invert due to the Kronecker form.
We follow this approach.
To solve the system in a tensor product format, we employ alternating optimization methods \cite{ds-amen-2014}.

\section{Tensor product formats and low-rank data compression}
\label{sec:TTlow}

We see that the cardinality of the full polynomial set $\Js_{M,\pb}$ may rise to prohibitively large values $(p+1)^M$.
In this paper, we study two ways to alleviate this problem.
First, we can fix the basis set a priori.
This is the case with the sparse set $\Js_{M,p}^{sp}$.
Due to particular properties of the stochastic elliptic equation, it is possible to derive a posteriori error indicators and build the sparse set adaptively \cite{eigel-adapt-stoch-fem-2014,chkifa-adapt-stochfem-2015}.

Another approach, which is applicable to a wider class of problems, is to use the full discretization set, but to approximate already discrete data, via a low-rank tensor product format.
For stochastic PDEs, low-rank approximations were used in e.g. \cite{beylkin-high-2005,doostan-non-intrusive-2013,litvinenko-spde-2013,khos-pde-2010, matthieszander-lowrank-2012, nouy-pgd-stoch-2010}.
This approach requires specific computational algorithms, since the data are represented in a nonlinear fashion.
In this section we suggest such an algorithm to construct a data-sparse format of the stochastic coefficient and quantities of interest.

\subsection{Tensor Train decomposition}
To show the techniques in the briefest way, we choose the so-called \emph{matrix product states} (MPS) formalism~\cite{fannes-mps-1992}, which introduces the following representation of a multi-variate array, or \emph{tensor}:
\begin{equation}\label{eq:tt}
  \uh(\alpha_1,\ldots,\alpha_M)  = \sum_{s_1=1}^{r_1} \sum_{s_2=1}^{r_2}  \cdots \sum_{s_{M-1}=1}^{r_{M-1}}
      \u^{(1)}_{s_0,s_1}(\alpha_1) \u^{(2)}_{s_1,s_2}(\alpha_2) \cdots \u^{(M)}_{s_{M-1},s_M}(\alpha_M).
\end{equation}
Surely, in the same form we may write $\bm{\kappa}(\alphab)$.
In numerical linear algebra this format is known as~``\emph{tensor train} (TT)'' representation \cite{osel-tt-2011, ot-tt-2009}.
Each TT core (or \emph{block}) is a three-dimensional array, $\u^{(m)} \in \mathbb{R}^{r_{m-1} \times (p_m+1) \times r_m}$, $m=1,\ldots,M$, where
$p_m$ denotes the \emph{mode size}, the polynomial order in the variable $\alpha_m$, and $r_m=r_m(\uh)$ is the \emph{TT rank}.
The total number of entries scales as $\mathcal{O}(Mpr^2)$, which is tractable as long as $r=\max\{r_m\}$ is moderate.

We still have not specified the \emph{border} rank indices $s_0$ and $s_M$.
In the classical TT definition \cite{osel-tt-2011} and in \eqref{eq:tt}, there is only one tensor $\uh(\alphab)$ in the left-hand side, therefore $s_0=s_M=1$, and also $r_0=r_M=1$.
However, the left-hand side might contain several tensors, such as $\bm{\tilde\kappa}_{\ell}(\alphab)$ \eqref{eq:k_kle_pce_coeff}.
Then we can associate $s_0=\ell$ or $s_M=\ell$, and approximate all tensors for different $\ell$ in the same \emph{shared} TT representation.

High-dimensional matrices (cf. $\K$ in \eqref{eq:K_gen}) can be also presented in the TT format,
$$
\K = \sum_{s_0,\ldots,s_{M-1}} \bm{K}^{(0)}_{s_0} \otimes \bm{K}^{(1)}_{s_0,s_1} \otimes \cdots \otimes \bm{K}^{(M)}_{s_{M-1}}.
$$
The matrix by vector product $\K \uh$ is then recast to TT blocks of $\K$ and $\uh$.
Similarly, using the multilinearity of the TT format, we can cast linear combinations of initial tensors to concatenations of TT blocks.

The principal favor of the TT format comparing to the Canonical Polyadic (CP) decomposition (which is also popular, see e.g. \cite{wahnert-stochgalerkin-2014, litvinenko-spde-2013}) is a stable quasi-optimal rank reduction procedure \cite{osel-tt-2011}, based on singular value decompositions.
The complexity scales as $\mathcal{O}(Mpr^3)$, i.e. it is free from the curse of dimensionality,
while the full accuracy control takes place.
This procedure can be used to reduce unnecessarily large ranks after the matrix by vector product or the linear combination in the TT format, and to compress a tensor if it is fully given as a multidimensional array and fits into the memory.
However, in our situation this is not the case, and we need another approach to construct a TT format.

\subsection{Cross interpolation of matrices}
If $M=2$, the left-hand side of \eqref{eq:tt} can be seen as a matrix. For simplicity we consider this case first.
The basic assumption is that any entry of a matrix (or tensor) can be evaluated.
However, to construct a TT approximation, we do not want to compute all elements, but only few of them.

The principal ingredient for this is based on the efficiency of an incomplete Gaussian elimination in an approximation of a low-rank matrix, also known as the Adaptive Cross Approximation (ACA) \cite{bebe-2000,bebe-aca-2011}.
Given is a matrix $\U=[\U(i,j)]\in\mathbb{R}^{p \times q}$, we select some ``good'' columns and rows to approximate the whole matrix,
\begin{equation}
\U(i,j) \approx \bm{\tilde U}(i,j)= \U(i,\J) \cdot \widehat \U^{-1} \cdot \U(\I,j),
\label{eq:2dcross}
\end{equation}
where $\widehat \U = \U(\I, \J)$, and $\I \subset \{1,\ldots,p\}$, $\J \subset \{1,\ldots,q\}$ are sets of indices of cardinality $r$, so e.g. $\U(i,\J) \in \mathbb{R}^{1 \times r}$.
It is known that there exists a quasi-optimal set of interpolating indices $\I,\J$.
\begin{lemma}[Maximum volume (\emph{maxvol}) principle \cite{gt-maxvol-2001} ]
$\;$ If $\I$ and $\J$ are such that $\mathrm{det}~\U(\I, \J)$ is maximal among all $r \times r$ submatrices of $\U$, then
 $$
 \|\U-\bm{\tilde U}\|_C \le (r+1) \min_{\mathrm{rank}(\bm{V}) = r} \|\U-\bm{V}\|_2,
 $$
where $\| \cdot \|_C$ is the Chebyshev norm, $\|\bm{X}\|_C = \max_{i,j}|\bm{X}_{i,j}|$.
\label{lem:maxvol}
\end{lemma}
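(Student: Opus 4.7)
The plan is to bound the pointwise error $|\U(i,j) - \tilde\U(i,j)|$ by combining a Schur-complement identity with Cramer's rule and the singular-value interlacing property for submatrices. Fix an arbitrary entry $(i,j)$. If $i\in\I$ or $j\in\J$, the skeleton formula \eqref{eq:2dcross} reproduces $\U(i,j)$ exactly, so the error vanishes. Otherwise, consider the augmented $(r+1)\times(r+1)$ submatrix
$$
\bm{M} = \U\big(\I\cup\{i\},\, \J\cup\{j\}\big).
$$
The Schur-complement formula applied to $\bm{M}$ with the leading block $\widehat\U = \U(\I,\J)$ gives
$$
\det \bm{M} = \det\widehat\U \cdot \bigl(\U(i,j) - \U(i,\J)\widehat\U^{-1}\U(\I,j)\bigr),
$$
so the error at $(i,j)$ equals $|\det \bm{M}|/|\det\widehat\U|$. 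The task reduces to showing that this ratio is at most $(r+1)\sigma$, where $\sigma := \min_{\mathrm{rank}(\bm{V})=r}\|\U-\bm{V}\|_2 = \sigma_{r+1}(\U)$.

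Next I would exploit maxvol in the following way. By hypothesis, every $r\times r$ submatrix of $\U$ — in particular, every $r\times r$ minor of $\bm{M}$ obtained by deleting one row and one column — has absolute determinant at most $|\det\widehat\U|$. Cramer's rule for $\bm{M}^{-1}$ therefore yields the entrywise bound
$$
|(\bm{M}^{-1})_{k\ell}| = \frac{|\det \bm{M}_{\ell k}|}{|\det \bm{M}|} \le \frac{|\det\widehat\U|}{|\det \bm{M}|}
$$
for every $k,\ell\in\{1,\ldots,r+1\}$. Summing squares over the $(r+1)^2$ entries and then using $\|\bm{M}^{-1}\|_2 \le \|\bm{M}^{-1}\|_F$ gives
$$
\|\bm{M}^{-1}\|_2 \le (r+1)\,\frac{|\det\widehat\U|}{|\det \bm{M}|}.
$$

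Finally, I would invoke the Cauchy interlacing property for singular values of submatrices: since $\bm{M}$ is an $(r+1)\times(r+1)$ submatrix of $\U$, one has $\sigma_{r+1}(\bm{M}) \le \sigma_{r+1}(\U) = \sigma$, hence $\|\bm{M}^{-1}\|_2 = 1/\sigma_{r+1}(\bm{M}) \ge 1/\sigma$. Combining with the previous inequality yields
$$
\frac{|\det \bm{M}|}{|\det\widehat\U|} \le (r+1)\sigma,
$$
which is exactly the desired pointwise bound on $|\U(i,j)-\tilde\U(i,j)|$. Taking the maximum over $(i,j)$ produces the Chebyshev-norm estimate.

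The main obstacle I anticipate is the bridge between the combinatorial maxvol hypothesis (an $r\times r$ determinant maximality statement) and the spectral quantity $\sigma_{r+1}(\U)$ appearing on the right-hand side. The trick that makes everything work is passing through $\bm{M}^{-1}$: Cramer converts the $r\times r$ minor bound into an entrywise bound on $\bm{M}^{-1}$, and the Frobenius-to-spectral comparison absorbs all $(r+1)^2$ entries into a single factor of $r+1$. One should also verify that the case $\det\bm{M}=0$ causes no issue — it merely means the pointwise error is zero, consistent with the claim.
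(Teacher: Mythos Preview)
The paper does not supply its own proof of this lemma; it simply cites the result from Goreinov and Tyrtyshnikov. So there is no in-paper argument to compare against.

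Your proposal is correct and is essentially the standard proof from the cited reference. The three ingredients --- the Schur-complement identity expressing the pointwise error as $|\det\bm{M}|/|\det\widehat\U|$, the Cramer/cofactor bound on the entries of $\bm{M}^{-1}$ coming from the maxvol hypothesis, and the singular-value interlacing $\sigma_{r+1}(\bm{M})\le\sigma_{r+1}(\U)$ --- are precisely the ones used in the original argument. The chain
\[
\frac{1}{\sigma_{r+1}(\U)} \;\le\; \frac{1}{\sigma_{r+1}(\bm{M})} \;=\; \|\bm{M}^{-1}\|_2 \;\le\; \|\bm{M}^{-1}\|_F \;\le\; (r+1)\,\frac{|\det\widehat\U|}{|\det\bm{M}|}
\]
rearranges exactly to the claimed bound, and you have handled the degenerate cases ($\det\bm{M}=0$ or $\sigma_{r+1}(\U)=0$) properly. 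Nothing is missing.
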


In practice, however, the computation of the true maximum volume submatrix is infeasible, since it is an NP-hard problem.
Instead one performs a heuristic iteration in an alternating fashion \cite{gostz-maxvol-2010}:
we start with some (e.g. random) low-rank factor $\U^{(1)} \in \mathbb{R}^{p \times r}$, determine indices $\I$ yielding a quasi-maxvol $r \times r$ submatrix in $\U^{(1)}$, and compute $\U^{(2)}$ as $r$ columns of $\U$ of the indices $\I$.
Vice versa, in the next step we find quasi-maxvol column indices in $\U^{(2)}$ and calculate corresponding $pr$ elements, collecting them into the newer $\U^{(1)}$, which hopefully approximates the true low-rank factor better than the initial guess.
This process continues until the convergence, which appears to be quite satisfactory in practice.

\subsection{TT-cross interpolation of tensors}

In higher dimensions we recurrently proceed in the same way, since the TT format constitutes a recurrent matrix low-rank factorization.
Let us merge the first $m$ and the last $M-m$ indices from $\alphab$.
The corresponding TT blocks will induce the following matrices:

\begin{definition}\label{def:interface}
Given a TT format \eqref{eq:tt} and an index $m$.
Define the \emph{left interface matrix} $\U^{<m} \in \mathbb{R}^{(p_1+1)\cdots(p_{m-1}+1) \times r_{m-1}}$ and the \emph{right interface} matrix $\U^{>m} \in \mathbb{R}^{r_m \times (p_{m+1}+1)\cdots(p_M+1)}$ as follows,
\begin{equation*}
 \begin{split}
  \U^{<m}_{s_{m-1}}(\alpha_1,\ldots,\alpha_{m-1}) & = \sum\limits_{s_1=1}^{r_1} \cdots \sum\limits_{s_{m-2}=1}^{r_{m-2}} \u^{(1)}_{s_1}(\alpha_1) \cdots \u^{(m-1)}_{s_{m-2},s_{m-1}}(\alpha_{m-1}), \\
  \U^{>m}_{s_{m}}(\alpha_{m+1},\ldots,\alpha_M) & = \sum\limits_{s_{m+1}=1}^{r_{m+1}} \cdots \sum\limits_{s_{M-1}=1}^{r_{M-1}} \u^{(m+1)}_{s_m,s_{m+1}}(\alpha_{m+1}) \cdots \u^{(M)}_{s_{M-1}}(\alpha_M).
 \end{split}
\end{equation*}
\end{definition}

Such matrices are convenient to relate high-dimensional TT expressions with their two-dimensional analogs.
For example, the TT format \eqref{eq:tt} can be written in the following form,
\begin{equation}
\uh(\alphab) \approx \U^{<m}(\alpha_1,\ldots,\alpha_{m-1}) \cdot \u^{(m)}(\alpha_m) \cdot \U^{>m}(\alpha_{m+1},\ldots,\alpha_M).
\label{eq:tt-l-b-r}
\end{equation}
The \emph{TT-cross} algorithm \cite{ot-ttcross-2010} assumes that the above expansion is valid on some $r_{m-1}(p_m+1)r_m$ indices, and can thus be seen as a system of equations on elements of $u^{(m)}$.
Let us be given $r_{m-1}$ \emph{left} indices $(\hat\alpha_1,\ldots,\hat\alpha_{m-1}) \in \I^{(m-1)}$ and $r_m$ \emph{right} indices $(\hat\alpha_{m+1},\ldots,\hat\alpha_{M}) \in \J^{(m+1)}$.
For the single index $\alpha_m$ we allow its full range $\{\alpha_m\} = \{0,1,\ldots,p_m\}$.
Requiring that \eqref{eq:tt-l-b-r} is valid on the combined indices
$$
\left(\hat\alpha_1,\ldots,\hat\alpha_{m-1},\alpha_m,\hat\alpha_{m+1},\ldots,\hat\alpha_{M}\right) \equiv \left(\I^{(m-1)}, \alpha_m, \J^{(m+1)}\right),
$$
we obtain the following computational rule for $\u^{(m)}$,
\begin{equation}
\u^{(m)}(\alpha_m) = \widehat \U_{<m}^{-1} \cdot \uh\left(\I^{(m-1)}, \alpha_m, \J^{(m+1)}\right) \cdot \widehat \U_{>m}^{-1},
\label{eq:ttcross}
\end{equation}
where $\widehat \U_{<m} = \U^{<m}(\I^{(m-1)})$ and $\widehat \U_{>m} = \U^{>m}(\J^{(m+1)})$ are the submatrices of $\U^{<m}$ and $\U^{>m}$ at indices $\I^{(m-1)}$ and $\J^{(m+1)}$, respectively.

Of course, these submatrices must be nonsingular.
In the previous subsection we saw a good strategy: if indices $\I^{(m-1)}$ and $\J^{(m+1)}$ are chosen in accordance with the maxvol principle for $\U^{<m}$ and $\U^{>m}$, the submatrices are not only nonsingular, but also 
provide a good approximation for \eqref{eq:tt-l-b-r}.
However, in practice $\U^{<m}$ and $\U^{>m}$ are too large to be treated directly.
Instead, we use the \emph{alternating} iteration over the dimensions and build nested index sets.

The alternating iteration means that we loop over $m=1,2,\ldots,M$ (the so-called \emph{forward} iteration) and $m=M,M-1,\ldots,1$ (\emph{backward} iteration).
Consider first the forward transition, $m-1 \rightarrow m$.
Given the set $\I^{(m-1)}$, the \emph{nested} indices $\I^{(m)}$ are defined as follows.
We concatenate $\I^{(m-1)}$ and $\alpha_m$, i.e. consider $r_{m-1} (p_m+1)$ indices $(\hat \alpha_{1},\ldots,\hat\alpha_{m-1},\alpha_m) \in \{\I^{(m-1)}, \alpha_m\}$, and select $r_m$ indices $\I^{(m)}$ only from $\{\I^{(m-1)}, \alpha_m\}$, not from all $\mathcal{O}(p^m)$ possibilities.
It can be seen that the next interface $\U^{<m+1}$, restricted to $\{\I^{(m-1)}, \alpha_m\}$, can be computed as a product of the previous submatrix $\widehat \U_{<m}$ and the current TT block $\u^{(m)}$.
To formalize this, we need the following definition.

\begin{definition}\label{def:folded}
Given a three-dimensional tensor $\u^{(m)} \in \mathbb{R}^{r_{m-1} \times (p_m+1) \times r_m}$, introduce the following reshapes, both pointing to the same data stored in $\u^{(m)}$:
\begin{itemize}
 \item \emph{left folding}: $\u^{|m\rangle}(s_{m-1},\alpha_m;~s_m) = \u^{(m)}_{s_{m-1},s_m}(\alpha_m), \quad \u^{|m\rangle} \in \mathbb{R}^{r_{m-1}(p_m+1) \times r_m},$  and
 \item \emph{right folding}: $\u^{\langle m |}(s_{m-1};~\alpha_m,s_m) = \u^{(m)}_{s_{m-1},s_m}(\alpha_m), \quad \u^{\langle m |} \in \mathbb{R}^{r_{m-1}\times (p_m+1) r_m}.$
\end{itemize}
\end{definition}

Then the restriction of $\U^{<m+1}$ writes as follows,
$$
\bm{V}^{\langle m|} = \widehat \U_{<m} \u^{\langle m |}, \quad \mbox{and} \quad \U^{<m+1}\left(\I^{(m-1)}, \alpha_m\right) = \bm{V}^{| m \rangle} \in \mathbb{R}^{r_{m-1}(p_m+1) \times r_m}.
$$
Thus, it is enough to apply the maximum volume algorithm \cite{gostz-maxvol-2010} to the matrix $\bm{V}^{| m \rangle}$, deriving \emph{local} maxvol indices $\hat i_m \subset \{1,\ldots, r_{m-1}(p_m+1)\}$, and obtain both $\I^{(m)}$ and $\widehat \U_{<m+1}$ by the restriction
$$
\I^{(m)} = \{\I^{(m-1)}, \alpha_m\}(\hat i_m), \quad \widehat \U_{<m+1} = \bm{V}^{| m \rangle}(\hat i_k) \in \mathbb{R}^{r_m \times r_m}.
$$

The backward transition $m+1 \rightarrow m$ for $\J^{(m)}$ and $\widehat \U_{>m}$ can be written analogously.
We show it directly in Alg.\ref{alg:cross} below.
In total, we need only $\mathcal{O}(n_{it} M p r^2)$ entries of $\uh$ to be evaluated, where $n_{it}$ is the number of alternating iterations, typically of the order of $10$.

\subsection{Rank-adaptive DMRG-cross algorithm}

A drawback of the TT-cross method is that the TT ranks are fixed; they have to be guessed a priori, which is also a problem of exponential complexity in $M$.
A remedy is to consider larger portions of data in each step.
The Density Matrix Renormalization Group (DMRG) algorithm was developed in the quantum physics community (\cite{white-dmrg-1993}, see also the review \cite{schollwock-2011} and
the references therein) to solve high-dimensional eigenvalue problems coming from the stationary spin Schroedinger equation.
It is written in a similar alternating fashion as the TT-cross procedure described above.
The crucial difference is that instead of one TT block as in \eqref{eq:ttcross} we calculate two neighboring TT blocks at once.

In the \emph{DMRG-cross} \cite{so-dmrgi-2011proc} interpolation algorithm, this is performed as follows.
Given $\mathbb{I}^{(m-1)}$, $\mathbb{J}^{(m+2)}$, we compute
$$
\u^{(m,m+1)}(\alpha_m,\alpha_{m+1}) =  \widehat \U_{<m}^{-1} \cdot \uh\left(\mathbb{I}^{(m-1)}, \alpha_m, \alpha_{m+1}, \mathbb{J}^{(m+2)}\right) \cdot \widehat \U_{>m+1}^{-1}.
$$
Then we need to separate indices $\alpha_m$ and $\alpha_{m+1}$ to recover the initial TT structure.
This can be done via the singular value decomposition.
The four-dimensional array $\u^{(m,m+1)}$ is reshaped to a matrix $\U^{(m,m+1)} \in \mathbb{R}^{r_{m-1}(p_m+1) \times (p_{m+1}+1) r_{m+1}}$, and the truncated SVD is computed,
$$
\U^{(m,m+1)} \approx \bm{V}\bSigma \bm{W}^\top, \quad \mbox{s.t.} \quad \left\|\U^{(m,m+1)} - \bm{V}\bSigma \bm{W}^\top\right\|_F\le \eps \left\|\U^{(m,m+1)}\right\|_F,
$$
where $\bm{V} \in \mathbb{R}^{r_{m-1} (p_m+1) \times \hat r_m}$, $\bSigma \in \mathbb{R}^{\hat r_m \times \hat r_m}$, $\bm{W} \in \mathbb{R}^{(p_{m+1}+1) r_{m+1} \times \hat r_m}$, and $\|\cdot\|_F$ is the Frobenius norm.
The new TT rank $\hat r_m$ is likely to differ from the old one $r_m$.
After that, we rewrite $\u^{(m)}$ and $\u^{(m+1)}$ with $\bm{V}$ and $\bm{W}$, respectively, replace $r_m=\hat r_m$, and continue the iteration.

To ensure that the perturbations introduced to $\u^{(m,m+1)}$ and the whole tensor $\uh$ coincide, we need to ensure that the interfaces $\U^{<m}$ and $\U^{>m+1}$ have orthonormal columns and rows, respectively.
Fortunately, this requires only small-sized QR decompositions of matrices $\u^{|m\rangle}$ and $\u^{\langle m |}$ \cite{osel-tt-2011,schollwock-2011} in the first iteration.
Later, the SVD will provide orthogonal factors in $\u^{(m)}$ and $\u^{(m+1)}$ automatically.

This algorithm allows a fast adaptation of TT ranks towards the requested accuracy threshold.
The price is, however, a larger degree of $p$ in the complexity, since we perform a full search in both $\alpha_m$ and $\alpha_{m+1}$ in each step.
The greedy-cross method \cite{sav-qott-2014} avoids this problem by maximizing the error over only $\mathcal{O}(rp)$ random entries among all $\mathcal{O}(r^2 p^2)$ elements in $\u^{(m,m+1)}$.
Here, instead of the neighboring block $\u^{(m+1)}$, it is the error that provides additional information and improves the approximation.
However, for the KLE-PCE coefficient \eqref{eq:k_kle_pce_coeff}, we have other candidates for such auxiliary data.
And we have reasons to consider them prior to the error.

\subsection{Block TT-Cross interpolation algorithm}
Note that each call of \eqref{eq:k_kle_pce_coeff} throws $L$ values, corresponding to different $\ell=1,\ldots,L$.
We may account for this in various ways.
Since $\ell$ has the meaning of the reduced spatial variable, we may feature it as an additional dimension.
But when we restrict the indices $\left\{\I^{(m-1)}, \alpha_m\right\}(\hat i_m) = \I^{(m)}$, we will remove some values of $\ell$ from consideration.
Therefore, a vast majority of information cannot be used: we evaluate $L$ values, but only a few of them will be
used to improve the approximation.
Another way is to run $L$ independent cross (e.g. DMRG-cross) algorithms to approximate each $\bm{\tilde \kappa}_{\ell}(\alphab)$ in its own TT format.
Yet, this is also not very desirable.
First, the TT ranks for the whole $\bm{\kappa}$ are usually comparable to the ranks of individual TT formats.
Therefore, $L$ cross algorithms consume almost $L$ times more time than the single run.
Secondly, we will have to add $L$ TT formats to each other by summing their ranks, which is to be followed by the TT approximation procedure.
The asymptotic complexity thus scales as $\mathcal{O}(L^3)$, which was found to be too expensive in practice.

A better approach is to store all $\bm{\tilde \kappa}_{\ell}(\alphab)$ in the same TT representation, employing the idea of the \emph{block} TT format \cite{dkos-eigb-2014}.
The resulting method has a threefold advantage: all data in each call of \eqref{eq:k_kle_pce_coeff} are assimilated, the algorithm adjusts the TT ranks automatically according to the given accuracy, and the output is returned as a single optimally-compressed TT format, convenient for further processing.

\begin{algorithm}[t]
 \caption{Block cross approximation of tensors in the TT format} \label{alg:cross}
 \begin{algorithmic}[1]
  \REQUIRE A function to evaluate $\uh_\ell(\alpha_1,\ldots,\alpha_M)$, initial TT guess $\u^{(1)}(\alpha_1)\cdots \u^{(M)}(\alpha_M)$, relative accuracy threshold $\eps$.
  \ENSURE  Improved TT approximation $\u^{(1)}_{\ell}(\alpha_1) \u^{(2)}(\alpha_2) \cdots \u^{(M)}(\alpha_M)$.
  \STATE Initialize $\mathbb{I}^{(0)}=[]$, \quad $\widehat \U_{<1}=1$, \quad $\mathbb{J}^{(M+1)} = []$, \quad $\widehat \U_{>M} = 1$.
\FOR{$\mathrm{iteration}=1,2,\ldots,n_{it}$ or until convergence}
  \FOR[Forward sweep]{$m=1,2,\ldots,M-1$}
    \IF[All indices are available, assimilate the information]{$\mathrm{iteration}>1$}
      \STATE Evaluate the tensors at cross indices and compute the common block by \eqref{eq:ttb_cross}.
      \STATE Compute the truncated SVD \eqref{eq:btt_svd} with accuracy $\eps$.
    \ELSE[Warmup sweep: the indices are yet to be built]
      \STATE Find QR decomposition $\u^{|m\rangle} = \bm{q}^{|m\rangle} \bm{R}$, $\left(\bm{q}^{|m\rangle}\right)^* \bm{q}^{|m\rangle } =\bm{I}$.
      \STATE Replace $\u^{\langle m+1 |} = \bm{R} \u^{\langle m+1 |}$, \quad  $\u^{|m\rangle}=\bm{q}^{|m\rangle}$.
    \ENDIF
    \STATE Compute the pre-restricted interface $\bm{V}^{\langle m |} = \widehat \U_{<m} \u^{\langle m |}$.
    \STATE Find \emph{local} maxvol indices $\hat i_m = \mathtt{maxvol}\left(\bm{V}^{|m\rangle}\right)$.
    \STATE New indices $\I^{(m)} = \left\{\I^{(m-1)}, \alpha_m\right\}(\hat i_m)$, interface $\widehat \U_{<m+1} = \bm{V}^{|m\rangle}(\hat i_m) \in \mathbb{R}^{r_{m} \times r_{m}}$.
  \ENDFOR
  \FOR[Backward sweep]{$m=M,M-1,\ldots,2$}
    \STATE Evaluate the tensors at cross indices and compute the common block by \eqref{eq:ttb_cross}.
    \STATE Compute the truncated SVD \eqref{eq:btt_svd_back} with accuracy $\eps$.
    \STATE Compute the pre-restricted interface $\bm{V}^{|m\rangle} = \u^{| m \rangle}  \widehat \U_{>m}$
    \STATE Find \emph{local} maxvol indices $\hat j_m = \mathtt{maxvol}\left(\left(\bm{V}^{\langle m |}\right)^\top\right)$.
    \STATE Restrict $\mathbb{J}^{(m)} = \left\{\alpha_m, \mathbb{J}^{(m+1)}\right\}(\hat j_m)$, \quad $\widehat \U_{>m-1} = \bm{V}^{\langle m |}(\hat j_m) \in \mathbb{R}^{r_{m-1} \times r_{m-1}}$.
  \ENDFOR
  \STATE Evaluate the first TT block $\u^{(1)}_{\ell}(\alpha_1) = \uh_\ell \left(\alpha_1, \mathbb{J}^{(2)} \right) \widehat \U_{>1}^{-1}$.
\ENDFOR
 \end{algorithmic}
\end{algorithm}

Assume that we have a procedure that, given an index $\alpha_1,\ldots,\alpha_M$, throws $L$ values $\uh_\ell(\alphab)$, $\ell=1,\ldots,L$.
When the tensor entries $\uh_{\ell}(\mathbb{I}^{(m-1)},\alpha_m,\mathbb{J}^{(m+1)})$ are evaluated, we modify \eqref{eq:ttcross} as follows:
\begin{equation}
\bm{y}^{(m)}(\alpha_m,\ell) =  \widehat \U_{<m}^{-1} \cdot \uh_{\ell}\left(\mathbb{I}^{(m-1)},\alpha_m,\mathbb{J}^{(m+1)}\right) \cdot \widehat \U_{>m}^{-1}.
\label{eq:ttb_cross}
\end{equation}
Now $\bm{y}^{(m)}$ is a four-dimensional tensor, in the same way as in the DMRG-cross.
We need to find a basis in $\alphab$ that is best suitable for all $\uh_{\ell}$.
Hence, we reshape $\bm{y}^{(m)}$ to the matrix $\bm{Y}^{(m)} \in \mathbb{R}^{r_{m-1}(p_m+1) \times L r_m}$ and compute the truncated singular value decomposition
\begin{equation}
\bm{Y}^{(m)} \approx \u^{|m\rangle} \bSigma \bm{W}^\top, \qquad \u^{|m\rangle} \in \mathbb{R}^{r_{m-1}(p_m+1) \times \hat r_m}, \quad \bSigma \in \mathbb{R}^{\hat r_m \times \hat r_m}, \quad \bm{W} \in \mathbb{R}^{L r_m \times \hat r_m}.
\label{eq:btt_svd}
\end{equation}
Again, the new rank $\hat r_m$ satisfies the Frobenius-norm error criterion, and replaces $r_m$ for the next iteration.
In the backward iteration, we reshape $\bm{y}^{(m)}$ to $\bm{Y}^{(m)} \in \mathbb{R}^{L r_{m-1} \times (p_m+1) r_m}$ and take the right singular vectors to the new TT block,
\begin{equation}
\bm{Y}^{(m)} \approx \bm{W} \bSigma \u^{\langle m |}, \qquad \bm{W} \in \mathbb{R}^{L r_{m-1} \times \hat r_m}, \quad \bSigma \in \mathbb{R}^{\hat r_m \times \hat r_m}, \quad \u^{\langle m |} \in \mathbb{R}^{\hat r_m \times (p_m+1) r_m}.
\label{eq:btt_svd_back}
\end{equation}
The whole procedure is summarized in the \emph{Block TT-Cross} Algorithm \ref{alg:cross}.

\section{TT-structured calculations with PCE}

\subsection{Computation of the PCE in the TT format via the cross interpolation}\label{sec:tt-kle-pce}
Equipped with Alg. \ref{alg:cross}, we may apply it to the PCE approximation, passing Formula \eqref{eq:k_kle_pce_coeff} as a function $u_\ell(\alphab)$ that evaluates tensor values on demand.
The initial guess may be even a rank-1 TT tensor with all blocks populated by random numbers, since the cross iterations will adapt both the representation and TT ranks.

Considering the sizes of the involved matrices, the complexity estimate can be written straightforwardly.
\begin{statement}
The cost to compute $\bm{\tilde\kappa}_{\ell}(\alphab)$ via the block TT-Cross Algorithm \ref{alg:cross} is
$$
\mathcal{O}(r^2 p (MN+NL) + r^3 pL + r^3 p L \cdot \min\{p,L\}).
$$
\end{statement}
 
The first two terms come from Formula \eqref{eq:k_kle_pce_coeff}, and the last one is the complexity of SVDs \eqref{eq:btt_svd} and \eqref{eq:btt_svd_back}.
\begin{remark}
$\;$It is unclear in general which term will dominate.
For large $N$, we are typically expecting that it is the evaluation \eqref{eq:ttb_cross}.
However, if $N$ is moderate (below $1000$), but the rank is large ($\sim 100$), the SVD consumes most of the time.
For the whole algorithm, assuming also $L \sim M$, we can thus expect the $\mathcal{O}(n_{it} M^2 N p r^3)$ complexity,
which is lower than $\mathcal{O}(M p L^3)$, which we could receive if we run independent cross algorithms for each $\ell$.
\end{remark}

As soon as the reduced PCE coefficients $\bm{\tilde\kappa}_{\ell}(\alphab)$ are computed, the initial expansion \eqref{eq:k_kle_pce} comes easily.
Indeed, after the backward cross iteration, the $\ell$ index belongs to the first TT block, and we may let it play the role of the ``zeroth'' TT rank index,
\begin{equation}
\bm{\tilde\kappa}_{\ell}(\alphab) = \sum_{s_1,\ldots,s_{M-1}} \bm{\kappa}^{(1)}_{\ell,s_1}(\alpha_1) \bm{\kappa}^{(2)}_{s_1,s_2}(\alpha_2) \cdots \bm{\kappa}^{(M)}_{s_{M-1}}(\alpha_M).
\end{equation}
For $\ell=0$ we extend this formula such that $\bm{\tilde\kappa}_0(\alphab)$ is the first identity vector $\bm{e}_0$, cf. \eqref{eq:f}.
Now we collect the spatial components from \eqref{eq:k_kle} into the ``zeroth'' TT block,
\begin{equation}
\kappa^{(0)}(x) = \begin{bmatrix} \kappa^{(0)}_\ell(x) \end{bmatrix}_{\ell=0}^{L} = \begin{bmatrix}\bar\kappa(x) & v_1(x) & \cdots & v_L(x) \end{bmatrix},
\end{equation}
then the PCE \eqref{eq:pce_k} writes as the following TT format,
\begin{equation}
\kappa(x,\alphab) = \sum_{\ell,s_1,\ldots,s_{M-1}} \kappa^{(0)}_{\ell}(x) \bm{\kappa}^{(1)}_{\ell,s_1}(\alpha_1) \cdots \bm{\kappa}^{(M)}_{s_{M-1}}(\alpha_M).
\label{eq:k_pce_tt}
\end{equation}

\subsection{Stiffness Galerkin operator in the TT format}\label{sec:stiff_tt}
With the full set $\Js_{M,\pb}$, we may benefit from the rank-1 separability of $\BDelta$, since each index $\alpha_m,\beta_m,\nu_m$ varies independently on the others.

\begin{lemma}
Given the PCE TT format \eqref{eq:k_pce_tt} for the coefficient $\kappa$ with the TT ranks $r(\kappa)$, the Galerkin operator \eqref{eq:K_gen} 
can be constructed in the TT format with the same ranks.
\end{lemma}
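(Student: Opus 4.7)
The plan is to exhibit the TT cores of $\K$ explicitly by plugging the TT format of $\bm{\tilde\kappa}_\ell$ into \eqref{eq:K_p} and exploiting the rank-1 separability of $\BDelta$ in \eqref{eq:Delta}. The identical TT ranks will then fall out by inspection.

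First I will rewrite the stochastic Galerkin cores using the TT representation of the PCE coefficient. By \eqref{eq:k_pce_tt}, for each $\ell=1,\ldots,L$ we have
$$
\bm{\tilde\kappa}_\ell(\bm{\nu}) = \sum_{s_1,\ldots,s_{M-1}} \bm{\kappa}^{(1)}_{\ell,s_1}(\nu_1)\,\bm{\kappa}^{(2)}_{s_1,s_2}(\nu_2)\cdots \bm{\kappa}^{(M)}_{s_{M-1}}(\nu_M).
$$
Substituting this into \eqref{eq:K_p} and interchanging the finite sums (all sums are over bounded ranges, so this is free), the separability $\BDelta_{\alphab,\betab,\bm{\nu}}=\prod_m \Delta_{\alpha_m,\beta_m,\nu_m}$ lets me push each factor $\sum_{\nu_m}\Delta_{\alpha_m,\beta_m,\nu_m}$ onto the corresponding TT block of $\kappa$. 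Defining
$$
\bm{K}^{(m)}_{s_{m-1},s_m}(\alpha_m,\beta_m) \;:=\; \sum_{\nu_m=0}^{p_m} \Delta_{\alpha_m,\beta_m,\nu_m}\,\bm{\kappa}^{(m)}_{s_{m-1},s_m}(\nu_m),
$$
I obtain the matrix TT representation
$$
\bm{K}^{(\omega)}_\ell(\alphab,\betab) = \sum_{s_1,\ldots,s_{M-1}} \bm{K}^{(1)}_{\ell,s_1}(\alpha_1,\beta_1)\cdots \bm{K}^{(M)}_{s_{M-1}}(\alpha_M,\beta_M),
$$
whose TT ranks coincide with the ranks $r_1,\ldots,r_{M-1}$ of $\kappa$, since the rank-1 contractions in $\nu_m$ do not enlarge them.

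Second, I fold the deterministic and spatial factors together by treating $\ell$ as the zeroth TT rank index, exactly as in \eqref{eq:k_pce_tt}. Using the convention $\bm{\tilde\kappa}_0(\alphab)=\bm{e}_0(\alphab)=\delta_{\alpha_1,0}\cdots\delta_{\alpha_M,0}$ (introduced after \eqref{eq:k_pce_tt}), the same contraction yields $\bm{K}^{(\omega)}_0 = \BDelta_0$, so the mean-field term in \eqref{eq:K_gen} is absorbed into the $\ell=0$ slice. With the zeroth block
$$
\bm{K}^{(0)}(i,j) = \begin{bmatrix} \bm{K}^{(x)}_0(i,j) & \bm{K}^{(x)}_1(i,j) & \cdots & \bm{K}^{(x)}_L(i,j) \end{bmatrix},
$$
formula \eqref{eq:K_gen} then reads
$$
\K(i,\alphab;j,\betab) = \sum_{\ell=0}^{L} \sum_{s_1,\ldots,s_{M-1}} \bm{K}^{(0)}_\ell(i,j)\,\bm{K}^{(1)}_{\ell,s_1}(\alpha_1,\beta_1)\cdots \bm{K}^{(M)}_{s_{M-1}}(\alpha_M,\beta_M),
$$
which is a TT matrix whose $m$-th rank for $m=1,\ldots,M-1$ equals $r_m(\kappa)$, and whose zeroth rank equals $L+1=r_0(\kappa)$.

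There is no real obstacle: the argument is essentially a bookkeeping exercise that rides on the rank-1 structure of $\BDelta$ and the multilinearity of the TT format. The only point worth being careful about is ensuring the $\ell=0$ (mean-field) term is treated consistently with the convention $\bm{\tilde\kappa}_0=\bm{e}_0$, so that both summands in \eqref{eq:K_gen} fit into the same TT skeleton without any rank inflation.
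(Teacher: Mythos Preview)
Your proposal is correct and follows essentially the same approach as the paper: define the matrix TT blocks $\bm{K}^{(m)}(\alpha_m,\beta_m)=\sum_{\nu_m}\Delta_{\alpha_m,\beta_m,\nu_m}\bm{\kappa}^{(m)}(\nu_m)$ by contracting each mode of $\BDelta$ against the corresponding TT block of $\kappa$, then agglomerate the spatial matrices $\bm{K}^{(x)}_\ell$ into the zeroth block. You are slightly more explicit than the paper in verifying that the $\ell=0$ slice reproduces $\BDelta_0$ via the convention $\bm{\tilde\kappa}_0=\bm{e}_0$, which is a welcome clarification.
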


\begin{proof}
Given the PCE \eqref{eq:k_pce_tt} in the TT format, we split the whole sum over $\bm{\nu}$ in \eqref{eq:K_p} to the individual variables,
\begin{equation}
\begin{split}
\sum\limits_{\nu \in \Js_{M,\pb}} \BDelta_{\alphab,\betab,\bm{\nu}} \bm{\tilde \kappa}_{\ell}(\bm{\nu}) & = \sum_{s_1,\ldots,s_{M-1}} \bm{K}^{(1)}_{\ell,s_1}(\alpha_1,\beta_1) \bm{K}^{(2)}_{s_1,s_2}(\alpha_2,\beta_2) \cdots \bm{K}^{(M)}_{s_{M-1}}(\alpha_M,\beta_M), \\
\bm{K}^{(m)}(\alpha_m,\beta_m) & = \sum_{\nu_m=0}^{p_m} \Delta_{\alpha_m,\beta_m,\nu_m} \bm{\kappa}^{(m)}(\nu_m), \quad m=1,\ldots,M.
\end{split}
\label{eq:deltakappa_rank1}
\end{equation}
A similar reduction of a large summation to one-dimensional operations arises also in quantum chemistry \cite{vekh-lattice-2014}.
Agglomerate $\bm{K}^{(x)}_{\ell}(i,j)$ from \eqref{eq:K_l} to the ``zeroth'' TT block for the operator,
then the TT representation for the operator writes with the same TT ranks as in $\bm{\tilde \kappa}$,
\begin{equation}
\K = \sum_{\ell,s_1,\ldots,s_{M-1}} \bm{K}^{(x)}_{\ell} \otimes \bm{K}^{(1)}_{\ell,s_1} \otimes \cdots \otimes \bm{K}^{(M)}_{s_{M-1}} \in \mathbb{R}^{(N \cdot \#\Js_{M,\pb}) \times (N \cdot \#\Js_{M,\pb})}.
\label{eq:K_tt}
\end{equation}
\end{proof}

One interesting property of the Hermite triples is that $\Delta_{\alpha,\beta,\nu}=0$ if e.g. $\nu>\alpha+\beta$.
That is, if we set the same $p$ for $\alpha$, $\beta$ and $\nu$, in the assembly of \eqref{eq:K_gen} we may miss some components, corresponding to $\alpha>p/2$, $\beta>p/2$.
To compute the Galerkin operator exactly, it is reasonable to vary $\nu_m$ in the range $\{0,\ldots,2p\}$, and hence assemble $\tilde \kappa$ in the set $\Js_{M,2\pb}$.
While in the sparse set it would inflate the storage of $\BDelta$ and $\K$ significantly,
in the TT format it is feasible: the TT ranks do not depend on $p$, and the storage grows only linearly with $p$.


\subsection{Computation of the solution function}
Having solved the system \eqref{eq:linsystem}, we obtain the PCE coefficients of the solution in the TT format,
\begin{equation}
u(x,\alphab) = \sum_{s_0,\ldots,s_{M-1}} u^{(0)}_{s_0}(x) \u^{(1)}_{s_0,s_1}(\alpha_1) \cdots \u^{(M)}_{s_{M-1}}(\alpha_M).
\label{eq:tt_u}
\end{equation}
Some statistics are computable directly from $u(x,\alphab)$, but generally we need a function in the initial random variables, $u(x,\thetab)$.
Since $\Js_{M,\pb}$ is a tensor product set, $u(x,\alphab)$ can be turned into $u(x,\thetab)$ without changing the TT ranks, similarly to the construction of the 
Galerkin matrix in the previous subsection:
\begin{equation}
u(x,\thetab) = \sum_{s_0,\ldots,s_{M-1}} u^{(0)}_{s_0}(x) \left(\sum_{\alpha_1=0}^{p} h_{\alpha_1}(\theta_1) \u^{(1)}_{s_0,s_1}(\alpha_1)\right) \cdots \left(\sum_{\alpha_M=0}^{p} h_{\alpha_M}(\theta_M) \u^{(M)}_{s_{M-1}}(\alpha_M)\right).
\label{eq:alpha_to_theta}
\end{equation}

\subsection{Computation of statistics}\label{subsec:levelSets}
In this section we discuss how to calculate some statistical outputs from the solution in the TT format, such as the mean, the (co)variance and the probability of an event.

The \emph{mean} value of $u$, in the same way as in $\kappa$, can be derived as the PCE coefficient at $\alphab=(0,\ldots,0)$, $\bar{u}(x) = u(x,0,\ldots,0)$.
It requires no additional calculations.

The \emph{covariance} is more complicated and requires both multiplication (squaring) and summation over $\alphab$.
By definition, the covariance reads
\begin{equation*}
\begin{split}
\cov_u(x,y) & = \int\limits_{\mathbb{R}^M} \left(u(x,\thetab) - \bar u(x)\right) \left(u(y,\thetab) - \bar u(y)\right) \rho(\thetab) d\thetab \\
 & = \sum\limits_{\substack{\alphab,\betab \neq (0,\ldots,0), \\ \alphab, \betab \in \mathcal{J}_{M,\pb}}} u(x,\alphab) u(y,\betab) \int\limits_{\mathbb{R}^M} H_{\alphab}(\thetab) H_{\betab}(\thetab) \rho(\thetab) d\thetab.
\end{split}
\end{equation*}
Knowing that $\int H_{\alphab}(\thetab) H_{\betab}(\thetab) \rho(\thetab) d\thetab = \alphab! \delta_{\alphab,\betab}$,
we take $\u_{s_0}(\alphab) = \u^{(1)}_{s_0}(\alpha_1) \cdots \u^{(M)}(\alpha_M)$ from \eqref{eq:tt_u} and multiply it with the Hermite mass matrix (TT ranks do not change),
\begin{equation}
\bm{w}_{s_0}(\alphab) :=  \u_{s_0}(\alphab) \sqrt{\alphab!} =  \sum_{s_1,\ldots,s_{M-1}} \left(\u^{(1)}_{s_0,s_1}(\alpha_1) \sqrt{\alpha_1!} \right) \cdots  \left(\u^{(M)}_{s_{M-1}}(\alpha_M) \sqrt{\alpha_M!} \right),
\label{eq:param_chunk_masscorr}
\end{equation}
and then take the scalar product $\bm{C} = \left[\bm{C}_{s_0,s_0'}\right]$, where $\bm{C}_{s_0,s_0'} = \dotprod{\bm{w}_{s_0}}{\bm{w}_{s_0'}}$ with $\bm{w}$ defined in \eqref{eq:param_chunk_masscorr}.
Given the TT rank bound $r$ for $u(x,\alphab)$, we deduce the $\mathcal{O}(Mpr^3)$ complexity of this step.
After that, the covariance is given by the product of $\bm{C}$ with the spatial TT blocks,
\begin{equation}
\cov_u(x,y) = \sum\limits_{s_0,s_0'=0}^{r_0} u^{(0)}_{s_0}(x) \bm{C}_{s_0,s_0'} u^{(0)}_{s_0'}(y),
\label{eq:cov_tt}
\end{equation}
where $u^{(0)}_{s_0}$ is the ``zeroth'' (spatial) TT block of the decomposition \eqref{eq:tt_u}.
Given $N$ degrees of freedom for $x$, the complexity of this step is $\mathcal{O}(N^2 r_0^2)$.
Note that a low-rank tensor approximation of a large covariance matrix is very important, for example, in Kriging  \cite{LitvNowak13}.
The \emph{variance} is nothing else than the diagonal of the covariance, $\var_u(x) = \cov_u(x,x)$.

Other important outputs are the characteristic, level set functions, and the probability of a particular event \cite{litvinenko-spde-2013}.
\begin{definition}\label{defn:LevelSetFrequency}
Let $\mathbb{S} \subset \mathbb{R}$ be a subset of real numbers.
\begin{itemize}
\item The \emph{characteristic} function of $u$ at $\mathbb{S}$ is defined pointwise for all $\thetab \in \mathbb{R}^{M}$ as follows,
\begin{equation}\label{equ:chi}
    \chi_{\mathbb{S}}(x,\thetab):=\left\{
                         \begin{array}{ll}
                           1, & u(x,\thetab) \in \mathbb{S}, \\
                           0, & u(x,\thetab) \nin \mathbb{S}.
                         \end{array}
                       \right.
\end{equation}
\item The \emph{level set} function reads $\mathcal{L}_{\mathbb{S}}(x,\thetab):= u(x,\thetab) \chi_{\mathbb{S}}(x,\thetab)$.
\item The \emph{probability} of $\mathbb{S}$ reads $\mathbb{P}_{\mathbb{S}}(x) = \int_{\mathbb{R}^{M}} \chi_{\mathbb{S}}(x,\thetab) \rho(\thetab) d\thetab$.
\end{itemize}
\end{definition}

The characteristic function can be computed using either the cross Algorithm \ref{alg:cross} (see also \cite{Ballan_Grasedyck10, Ballan_Grasedyck14}), which takes Formula \eqref{equ:chi} as the function that evaluates a high-dimensional array $\chi$ at the indices in $x,\thetab$, or the Newton method for the sign function \cite{litvinenko-spde-2013}.
In both cases we may face a rapid growth of TT ranks during the cross or Newton iterations: the characteristic function is likely to have a discontinuity that 
is not aligned to the coordinate axes, and some of 
the singular values in the TT decomposition will decay very slowly.
We face the same problem with increasing ranks during computing the level set functions and exceedance probabilities.

However, the probability is easier to compute, especially if it is relatively small.
Using the same cross algorithm, we can compute directly the product $\hat\chi_{\mathbb{S}}(x,\thetab) = \chi_{\mathbb{S}}(x,\thetab) \rho(\thetab)$.
The probability (at a fixed $x$) is then computed as a scalar product with the all-ones vector in the TT format, $\mathbb{P}_{\mathbb{S}}(x) = \dotprod{\hat\chi}{\mathtt{1}}$.
But if $\mathbb{P}$ is small, it means that most of the entries in $\hat\chi$ are small, and do not inflate TT ranks, which might be the case for $\chi$.
Typically, the computation of small probabilities is used to predict the failure risk of a technical system.
The event set has the form $\mathbb{S} = \left\{z\in\mathbb{R}:~z>\tau\right\}$, and the probability is called the \emph{exceedance} probability.
This will be studied in numerical experiments.
%
%
%
\section{Numerical Experiments}
\label{sec:numerics}
We verify the approach on the elliptic stochastic equation \eqref{eq:elliptic} in a two-dimensional $L$-shape domain, $x=(x_1,x_2) \in D = [-1,1]^2 \backslash [0,1]^2$.
We pose zero Dirichlet boundary conditions and use the deterministic right-hand side $f = f(x) = 1$.
The stochasticity is introduced in the diffusion coefficient $\kappa(x,\omega)$; we investigate log-normal and beta distributions for $\kappa$.

To generate the spatial mesh, we use the standard PDE Toolbox in MATLAB.
We consider from $1$ to $3$ refinement levels of the spatial grid, denoted by $R$.
The first refinement $R=1$ yields $557$ degrees of freedom, $R=2$ gives $2145$ points, and $R=3$ corresponds to $8417$ points.
Since we have to store the stiffness matrices in a dense form, we cannot refine the grid further.

The KLE of both $\kappa$ and $\gamma$ is truncated to the same number of terms $L=M$.

All utilities related to the Hermite PCE were taken from the \emph{sglib} \cite{sglib}, including discretization and solution routines in the sparse polynomial set $\Js_{M,p}^{sp}$.
However, to work with the TT format (for full $\Js_{M,\pb}$), we employ the \emph{TT-Toolbox} \cite{tt-toolbox}.
The same polynomial orders are chosen in all variables, $\pb=(p,\ldots,p)$.
We use the modules of \emph{sglib} for low-dimensional stages and replace the parts corresponding to high-dimensional calculations by the TT algorithms.
The block TT-cross Alg. \ref{alg:cross} is implemented in the procedure \texttt{amen\_cross.m} from the TT-Toolbox, and the linear system \eqref{eq:linsystem} was solved via the Alternating Minimal Energy (AMEn) method \cite{ds-amen-2014}, the procedure \texttt{amen\_solve.m} from the companion package tAMEn \cite{tamen}.
Computations were conducted in MATLAB R2012a on a single core of the Intel Xeon X5650 CPU at 2.67GHz, provided by the Max- Planck-Institute, Magdeburg.

The accuracy of the coefficient and the solution was estimated using the Monte Carlo method with $N_{mc}$ simulations.
We approximate the average $L_2$-error as follows,
\begin{equation*}
 E_{\kappa} = \frac{1}{N_{mc}} \sum\limits_{z=1}^{N_{mc}} \dfrac{\sqrt{\sum_{i=1}^{N} \left(\kappa(x_i,\thetab_z) - \kappa_*(x_i,\thetab_z)\right)^2}}{\sqrt{\sum_{i=1}^{N} \kappa_*^2(x_i,\thetab_z)}} \approx \int\limits_{\mathbb{R}^M} \dfrac{\|\kappa(x,\thetab) - \kappa_*(x,\thetab)\|_{L_2(D)}}{\|\kappa_*(x,\thetab)\|_{L_2(D)}} \rho(\thetab) d\thetab,
\end{equation*}
where $\{\thetab_z\}_{z=1}^{N_{mc}}$ are normally distributed random samples, $\{x_i\}_{i=1}^{N}$ are the spatial grid points,
and $\kappa_*(x_i,\thetab_z) = \phi\left(\gamma(x_i,\thetab_z)\right)$ is the reference coefficient computed without using the PCE for $\phi$.
The same definition is used for the solution $u$, with $u_*(x,\thetab_z)$ being the solution of the deterministic PDE with the coefficient $\kappa_*(x,\thetab_z)$.

Besides that we compare the statistics obtained with our approaches and the Monte Carlo method.
For the mean and variance we use the same discrete approximation to the $L_2$-norm,
$$
E_{\bar u} = \dfrac{\|\bar u - \bar u_*\|_{L_2(D)}}{\|\bar u_*\|_{L_2(D)}}, \qquad E_{\var_u} = \dfrac{\|\var_u - \var_{u*}\|_{L_2(D)}}{\|\var_{u*}\|_{L_2(D)}}.
$$

To examine the computation of probabilities, we compute the exceedance probability.
This task can be simplified by taking into account the maximum principle: the solution is convex w.r.t. $x$.
We compute the maximizer of the mean solution, $x_{\max}:~\bar u(x_{\max}) \ge \bar u(x)~\forall x \in D$.
Fix $x$ to $x_{\max}$ and consider only the stochastic part $\uh_{\max}(\thetab) = u(x_{\max},\thetab)$, and $\hat u = \bar u(x_{\max})$.
Now, taking some $\tau>1$, we compute
\begin{equation}
\mathbb{P} = \mathbb{P}\left(\uh_{\max}(\thetab)>\tau \hat u \right) = \int\limits_{\mathbb{R}^M} \chi_{\uh_{\max}(\thetab)>\tau \hat u} (\thetab) \rho(\thetab) d\thetab.
\label{eq:rare_prob}
\end{equation}
By $\mathbb{P}_*$ we will also denote the probability computed from the Monte Carlo method, and estimate the error as 
$E_{\mathbb{P}} = \left|\mathbb{P}-\mathbb{P}_*\right|/\mathbb{P}_*$.

\subsection{Log-normal distribution}
Let
$$
\kappa(x,\omega) = \exp\left(1+\sigma \frac{\gamma(x,\omega)}{2}\right)+10,
$$
where $\gamma \sim \mathcal{N}(0,1)$ is the standard normally distributed random field.
The covariance function is taken Gaussian, $\cov_{\kappa}(x,y) = \exp\left(-\frac{\|x-y\|^2}{2 l_c^2} \right)$, where $l_c$ is the (isotropic) correlation length.

The default parameters are the following: number of KLE terms $M=20$, polynomial order $p=3$, correlation length $l_c=1$, dispersion $\sigma=0.5$, refinement level of the spatial grid $R = 1$, and the tensor approximation accuracy $\eps=10^{-4}$.
Below we will vary each of these parameters, keeping the others fixed.
For the computation of the probability \eqref{eq:rare_prob} we use $\tau=1.2$.

\subsubsection{Verification of the block cross algorithm}
Formula \eqref{eq:k_kle_pce_coeff} can be evaluated for each KLE index $\ell$ independently, using existing cross approximation algorithms.
We compare with the so-called DMRG cross method \cite{so-dmrgi-2011proc}, which is conceptually the closest approach to our Algorithm \ref{alg:cross}.
In Table \ref{tab:crosses} we show the performance of the single run of Algorithm \ref{alg:cross} (which gives the coefficient for all $\ell$ simultaneously) and of $L$ DMRG crosses, followed by the summation of individual terms to the common representation.
We see that even if the TT ranks of the output are exactly the same, the times differ dramatically.
This is because the ranks of individual components and of the whole coefficient are comparatively the same, and the DMRG approach requires roughly $L$ times more operations.
The last column in Table \ref{tab:crosses} confirms that both approaches deliver the same data up to the approximation tolerance.

\begin{table}[t]
\centering
\caption{Performance of the block cross Alg. \ref{alg:cross} versus the DMRG cross \cite{so-dmrgi-2011proc}}
\label{tab:crosses}
\begin{tabular}{c|cc|cc|c}
$M$  & \multicolumn{2}{c|}{Block cross} & \multicolumn{2}{c|}{DMRG crosses} & $\underline{\|\kappa_{DMRG}-\kappa_{Block}\|}$ \\
     &  CPU time, sec.  & $r_{\kappa}$ &   CPU time, sec.  & $r_{\kappa}$ & $\|\kappa_{Block}\|$ \\ \hline
10   &     4.908   &    20  &  31.29   & 20   &  2.77e-5  \\
15   &     10.36   &    27  &  114.9   & 27   &  2.24e-5  \\
20   &     19.11   &    32  &  286.2   & 33   &  1.83e-4  \\
30   &     49.89   &    39  &  1372.0  & 50   &  2.52e-4  \\
\end{tabular}
\end{table}

\subsubsection{Experiment with the polynomial order $p$}
First, we provide a detailed study of the computational time of each of the stages in the TT and Sparse methods: construction of the coefficient ($T_{\kappa}$), construction of the operator ($T_{op}$), and the solution of the system ($T_{u}$).
The results are shown in Table \ref{tab:log-normal-ttimes-p}, and times are measured in seconds.
The complexity of the cross algorithm, employed for the computation of $\kappa$ in the TT format, grows linearly with $p$, since the TT ranks are stable w.r.t. $p$ (see Table \ref{tab:log-normal-p}).
However, it is much slower than the direct evaluation of the coefficients in the sparse set.
This is mainly due to the singular value decompositions, involving matrices of sizes hundreds.

Nevertheless, for the computation of the Galerkin matrix the situation is the opposite.
In the TT format, the computations via formula \eqref{eq:deltakappa_rank1} are very efficient, since they involve $M$ products of $p^2 \times 2p$ matrices.
In the sparse representation, we have to perform all $(\#\Js_{M,p}^{sp})^3$ operations, which is very time-consuming.
Since $\#\Js_{M,p}^{sp}$ grows exponentially with $p$, we had to skip the cases with large $p$.

The solution stage is more simple, since the mean value preconditioner is quite efficient, both for the standard CG method with the sparse set and the AMEn method for the TT format.
Again, the complexity of the TT solver grows linearly with $p$.
The sparse solver works reasonably fast as well, but it cannot be run before the matrix elements are computed\footnote{It is sometimes advocated to 
avoid a construction of the matrix and to compute its elements only when they are needed in the 
matrix-vector product. It saves memory, but the computational time will be comparatively the same, since it is proportional to the number of operations.}; hence it is also skipped for $p=4,5$.

\begin{table}[t]
\centering
 \caption{Detailed CPU times (sec.) versus $p$, log-normal distribution}
 \label{tab:log-normal-ttimes-p}
 \begin{tabular}{c|ccc|ccc}
     & \multicolumn{3}{c|}{TT (full index set $\Js_{M,\pb}$)} & \multicolumn{3}{c}{Sparse (index set $\Js_{M,p}^{sp}$)} \\
 $p$ & $T_{\kappa}$ & $T_{op}$ & $T_{u}$ & $T_{\kappa}$ & $T_{op}$ & $T_{u}$ \\ \hline
 1 &  9.6166  &    0.1875 &  1.7381    &     0.4525  &    0.2830  &   0.6485 \\
 2 & 14.6635  &    0.1945 &  2.9584    &     0.4954  &    3.2475  &   1.4046  \\
 3 & 19.1182  &    0.1944 &  3.4162    &     0.6887  &    1027.7  &   18.1263 \\
 4 & 24.4345  &    0.1953 &  4.2228    &     2.1597  &      ---   &     ---   \\
 5 & 30.9220  &    0.3155 &  5.3426    &     9.8382  &      ---   &     ---   \\
 \end{tabular}
\end{table}

In Table \ref{tab:log-normal-p} we present the total CPU times required in both methods to find the solution $u$, the time for computing $\hat \chi$, maximal TT ranks of the coefficient ($r_{\kappa}$),
the solution ($r_u$) and the weighted characteristic function ($r_{\hat \chi}$), as well as statistical errors in the coefficient ($E_{\kappa}$) and the solution ($E_{u}$).
The probability $\mathbb{P}$ is presented only for the TT calculation.
Since $\mathbb{P} \sim 10^{-4}$, $10000$ simulations may be not enough to compute $\mathbb{P}$ with the Monte Carlo method.
Below we present a dedicated test of the Monte Carlo approach.

\begin{table}[t]
\centering
 \caption{Performance versus $p$, log-normal distribution}
 \label{tab:log-normal-p}
 \begin{tabular}{c|ccc|ccc|cc|cc|c}
  $p$ & \multicolumn{3}{c|}{CPU time, sec.} & $r_{\kappa}$ & $r_u$ & $r_{\hat \chi}$ & \multicolumn{2}{c|}{$E_{\kappa}$} & \multicolumn{2}{c|}{$E_{u}$} & $\mathbb{P}$ \\ \hline
      & TT       & Sparse & $\hat\chi$       &              &       &            &  TT & Sparse & TT & Sparse & TT  \\ \hline
  1   & 11.54  & 1.38     & 0.23   & 32   &   42  &   1  & 3.75e-3  & 1.69e-1  & 9.58e-3  & 1.37e-1  & 0       \\
  2   & 17.81  & 5.14     & 0.32   & 32   &   49  &   1  & 1.35e-4  & 1.10e-1  & 4.94e-4  & 4.81e-2  & 0       \\
  3   & 22.72  & 1046     & 83.12  & 32   &   49  &   462  & 6.21e-5  & 2.00e-3  & 2.99e-4  & 5.29e-4  & 2.75e-4     \\
  4   & 28.85  &   ---    & 69.74  & 32   &   50  &   416  & 6.24e-5  &     ---  & 9.85e-5  &     ---  & 1.21e-4     \\
  5   & 36.58  &   ---    & 102.5  & 32   &   49  &   410  & 6.27e-5  &     ---  & 9.36e-5  &     ---  & 6.20e-4     \\
 \end{tabular}
\end{table}

\subsubsection{Experiment with the KLE dimension $M$}
The length of the truncated KLE is another crucial parameter of the stochastic PDE.
In Table \ref{tab:log-normal-M} we compare the TT and Sparse procedures.

\begin{table}[t]
\centering
 \caption{Performance versus $M$, log-normal distribution}
 \label{tab:log-normal-M}
 \begin{tabular}{c|ccc|ccc|cc|cc|c}
  $M$ & \multicolumn{3}{c|}{CPU time, sec.} & $r_{\kappa}$ & $r_u$ & $r_{\hat \chi}$ & \multicolumn{2}{c|}{$E_{\kappa}$} & \multicolumn{2}{c|}{$E_{u}$} & $\mathbb{P}$ \\ \hline
       & TT     & Sparse & $\hat\chi$       &              &       &            &  TT & Sparse & TT & Sparse & TT  \\ \hline
  10   & 6.401  & 6.143  & 1.297   & 20 & 39     & 70  & 2.00e-4 &  1.71e-1  & 3.26e-4 &  1.45e-1 & 2.86e-4       \\
  15   & 12.15  & 92.38  & 22.99     & 27 & 42   & 381 & 7.56e-5 &  1.97e-3  & 3.09e-4 &  5.41e-4 & 2.99e-4       \\
  20   & 21.82  & 1005   & 67.34 & 32 & 50       & 422 & 6.25e-5 &  1.99e-3  & 2.96e-4 &  5.33e-4 & 2.96e-4   \\
  30   & 52.92  & 48961  & 136.5     & 39 & 50   & 452 & 6.13e-5 &  9.26e-2  & 3.06e-4 &  5.51e-2 & 2.78e-4
 \end{tabular}
\end{table}

We see that the accuracy of the TT approach is stable w.r.t. $M$, and the complexity grows mildly.
Note, however, that the correlation length $l_c=1$ is quite large and yields a fast decay of the KLE, such that $M=20$ is actually enough for the accuracy $10^{-4}$.
The TT approach demonstrates stability w.r.t. the overapproximation at $M=30$.
This is not the case for the sparse approach: at high $M$ the accuracy is lost.
This is because $p=3$ is not enough to transform the covariance \eqref{eq:cov_transform} accurately.
Since eigenvalues of the covariance decay rapidly, higher eigenpairs become strongly perturbed (the eigenvalues can even become negative), and a large error propagates to the PCE.
In the full set, the maximal polynomial order is equal to $pM$, and the error of the covariance transform is negligible.

\subsubsection{Experiment with the correlation length $l_c$}
The Gaussian covariance function yields an exponential decay of the KLE coefficients \cite{SCHWAB2006,Bieri2009},
but the actual rate is highly dependent on the correlation length \cite{Hcovariance, litvinenko2009sparse}.
In this experiment, we study the range of lengths from $0.1$ to $1.5$.
In order to have a sufficient accuracy for all values of $l_c$, we fix the dimension $M=30$.
The results are presented in the same layout as before in Table \ref{tab:log-normal-l}.

\begin{table}[t]
\centering
 \caption{Performance versus $l_c$, log-normal distribution}
 \label{tab:log-normal-l}
 \begin{tabular}{c|ccc|ccc|cc|cc|c}
  $l_c$ & \multicolumn{3}{c|}{CPU time, sec.} & $r_{\kappa}$ & $r_u$ & $r_{\hat \chi}$ & \multicolumn{2}{c|}{$E_{\kappa}$} & \multicolumn{2}{c|}{$E_{u}$} & $\mathbb{P}$ \\ \hline
       & TT      & Sparse & $\hat\chi$       &              &       &            &  TT & Sparse & TT & Sparse & TT  \\ \hline
  0.1   & 216  & 55826  & 0.91  & 70 & 50 &   1 & 1.98e-2 &   1.98e-2 & 1.84e-2 &  1.84e-2   & 0   \\
  0.3   & 317  & 52361  & 41.8  & 87 & 74 & 297 & 3.08e-3 &   3.51e-3 & 2.64e-3 &  2.62e-3   & 8.59e-31   \\
  0.5   & 195  & 51678  & 58.1  & 67 & 74 & 375 & 1.49e-4 &   2.00e-3 & 2.58e-4 &  3.10e-4   & 6.50e-33   \\
  1.0   & 57.3  & 55178  & 97.3  & 39 & 50 & 417 & 6.12e-5 &   9.37e-2 & 3.18e-4 &  5.59e-2   & 2.95e-04   \\
  1.5   & 32.4  & 49790  & 121  & 31 & 34 & 424 & 3.24e-5 &   2.05e-1 & 4.99e-4 &  1.73e-1   & 7.50e-04   \\
 \end{tabular}
\end{table}

In the TT approach, we see a clear decrease of the computational complexity and the error with growing covariance length.
This is because the SVD approximation in the TT format automatically reduces the storage w.r.t. the latter (less important) variables, if the KLE decay is fast enough.
The TT errors reflect the amount of information discarded in the truncated KLE tail, which is large for small $l_c$ and small otherwise.

The errors in the sparse approach behave in the same way until $l_c=0.5$, but for $l_c=1$ and $1.5$ the dimension $M=30$ is too large, and the instability w.r.t. the overapproximation takes place.

With fixed $M$, the exceedance probability is very sensitive to the correlation length.
Truncating the KLE, we reduce the total variance of the random field. For a (quasi)-Gaussian distribution, a small perturbation of the variance has
a small effect on the integral over the peak region,
but may have a very large relative effect on the tail region, which corresponds to the small exceedance probability.
\subsubsection{Experiment with the dispersion $\sigma$}
The variance of the normally distributed field $\gamma(x,\omega)$ is equal to $\sigma^2$.
Since it enters $\kappa$ under the exponential, it influences the variance of $\kappa$ significantly.
In Table \ref{tab:log-normal-sigma} we vary $\sigma$ from $0.2$ to $1$ and track the performance of the methods.
As expected, the computational complexity grows with $\sigma$, as does the contrast in the coefficient.
However, we were able to perform all computations for each value of $\sigma$.

\begin{table}[t]
\centering
 \caption{Performance versus $\sigma$, log-normal distribution}
 \label{tab:log-normal-sigma}
 \begin{tabular}{c|ccc|ccc|cc|cc|c}
  $\sigma$ & \multicolumn{3}{c|}{CPU time, sec.} & $r_{\kappa}$ & $r_u$ & $r_{\hat \chi}$ & \multicolumn{2}{c|}{$E_{\kappa}$} & \multicolumn{2}{c|}{$E_{u}$} & $\mathbb{P}$ \\ \hline
       & TT      & Sparse & $\hat\chi$       &              &       &            &  TT & Sparse & TT & Sparse & TT  \\ \hline
  0.2   &    15.93  & 1008   & 0.348   & 21& 31 &   1 &    5.69e-5 &  4.76e-5 &  4.19e-5 &  1.30e-5  & 0   \\
  0.4   &    18.72  & 968.3  & 0.341   & 29& 42 &   1 &    6.88e-5 &  8.04e-4 &  1.40e-4 &  2.14e-4  & 0   \\
  0.5   &    21.23  & 970.1  & 79.96   & 32& 49 & 456 &    6.19e-5 &  2.02e-3 &  3.05e-4 &  5.45e-4  & 2.95e-4   \\
  0.6   &    24.08  & 961.5  & 24.72   & 34& 57 & 272 &    9.12e-5 &  4.42e-3 &  6.14e-4 &  1.16e-3  & 2.30e-3   \\
  0.8   &    31.69  & 969.1  & 67.93   & 39& 66 & 411 &    4.40e-4 &  8.33e-2 &  2.02e-3 &  2.90e-2  & 8.02e-2   \\
  1.0   &    50.67  & 1071   & 48.44   & 44& 82 & 363 &    1.73e-3 &  4.10e-1 &  4.96e-3 &  3.08e-1  & 9.17e-2
 \end{tabular}
\end{table}

\subsubsection{Experiment with the spatial grid refinement $R$}
Since the efforts of dealing with the full spatial matrix grow significantly with the grid refinement,
in this test we limit ourselves to $M=10$.
The principal observations from Table \ref{tab:log-normal-R_l} are that the TT rank and the accuracy\footnote{Note that the errors are estimated via the Monte Carlo method on the same grids, thus they show the accuracy of the PCE approximation, not the spatial discretization.} are stable w.r.t. the grid refinement.
Therefore, we may expect that the TT approach will also be efficient for finer grids, if we find an efficient way to deal with the spatial dimension.
A research on \emph{non-intrusive} stochastic Galerkin methods, addressing this issue, has begun recently \cite{matthies-to-be-intrusive-1-2014}, and we plan to adopt it in the TT framework in future.

\begin{table}[t]
\centering
 \caption{Performance versus $R$, log-normal distribution. The left column shows the number of spatial degrees of freedom ($\#$DoF) for $R=1,2,3$.}
 \label{tab:log-normal-R_l}
 \begin{tabular}{c|ccc|ccc|cc|cc|c}
  $\#$DoF & \multicolumn{3}{c|}{CPU time, sec} & $r_{\kappa}$ & $r_u$ & $r_{\hat \chi}$ & \multicolumn{2}{c|}{$E_{\kappa}$} & \multicolumn{2}{c|}{$E_{u}$} & $\mathbb{P}$ \\ \hline
      & TT     & Sparse & $\hat\chi$       &              &       &            &  TT & Sparse & TT & Sparse & TT  \\ \hline
  557   & 6.40  & 6.09  & 1.29    & 20 & 39 & 71   &    2.00e-4 &  1.71e-1  &    3.26e-4 &  1.45e-1  & 2.86e-4       \\
  2145   & 8.98  & 13.7  & 1.17    & 20 & 39 & 76   &    1.74e-4 &  1.89e-3  &    3.33e-4 &  5.69e-4  & 2.90e-4       \\
  8417   & 357  & 171  & 0.84    & 20 & 40 & 69   &    1.65e-4 &  1.88e-3  &    3.24e-4 &  5.64e-4  & 2.93e-4   \\
 \end{tabular}
\end{table}

\subsubsection{Comparison with the Monte Carlo Method}
For the Monte Carlo test, we prepare the TT solution with parameters $p=5$ and $M=30$.
The results are presented in Table \ref{tab:log-normal-mc}.
In the left part of the table we show the performance of the Monte Carlo method with different numbers of simulations: total CPU time ($T_{MC}$), 
errors in the mean and variance of $u$, and a small exceedance probability with its error.
The right part contains the results of the TT approach: the aggregated CPU time of construction of the coefficient, operator and solution ($T_{solve}$), the time to compute the weighted characteristic function $(T_{\hat\chi})$, 
TT ranks of all data and the probability calculated from $\hat\chi$.

\begin{table}[t]
\centering
 \caption{Verification of the Monte Carlo method, log-normal distribution}
 \label{tab:log-normal-mc}
 \begin{tabular}{c|l|cc|cc||c|c}
  $N_{mc}$ &  $T_{MC}$, sec.		& $E_{\bar u}$ & $E_{\var_u}$  & $\mathbb{P}_*$ & $E_{\mathbb{P}}$ & \multicolumn{2}{c}{TT results} \\  \hline
  $10^2$   &  0.6398                    & 9.23e-3   & 1.49e-1     & 0               & $\infty$ & $T_{solve}$ &  96.89 sec. \\
  $10^3$   &  6.1867                    & 1.69e-3   & 5.97e-2     & 0               & $\infty$ & $T_{\hat\chi}$            &  157.0 sec. \\
  $10^4$   &  6.1801$\cdot 10^1$        & 5.81e-4   & 7.12e-3     & 4.00e-4         & 5.53e-1  & $r_{\kappa}$              &  39     \\
  $10^5$   &  6.2319$\cdot 10^2$        & 2.91e-4   & 2.91e-3     & 4.10e-4         & 5.15e-1  & $r_{u}$                   &  50     \\
  $10^6$   &  6.3071$\cdot 10^3$        & 1.23e-4   & 9.76e-4     & 4.60e-4         & 3.51e-1  & $r_{\hat\chi}$  	      &  432    \\
           &                            &              &                &                   &              & $\mathbb{P}$             &  6.214e-4
 \end{tabular}
\end{table}

We see that the cost of the TT method is comparable with the cost of $40000$ Monte Carlo tests.
That many realizations already provide a good approximation of the mean, a bit less accurate for the variance, but it is by far not sufficient for a confident estimate of the exceedance probability.
Therefore, the tensor product methods can be recommended as a competitive alternative to classical techniques for computing exceedance probabilities.


\subsection{Beta distribution}
The Hermite expansion \eqref{eq:phi_tranform} of the $\exp$ function in the log-normal case yields the coefficients of the form $\phi_i = \frac{c}{i!}$.
Therefore, the PCE coefficient formula \eqref{eq:pce_k_coeffs} resolves to a direct product of univariate functions of $\alpha_1,\ldots,\alpha_M$,
and the tensor format of the PCE can be constructed explicitly \cite{wahnert-stochgalerkin-2014}.
To demonstrate the generality of the cross algorithm, we also consider a more exotic beta-distributed coefficient,
$$
\kappa(x,\omega) = \mathcal{B}^{-1}_{5,2}\left(\frac{1+\mathrm{erf}\left(\frac{\gamma(x,\omega)}{\sqrt{2}}\right)}{2}\right)+1, \qquad \mbox{where} \quad \mathcal{B}_{a,b}(z) = \frac{1}{B(a,b)} \int\limits_{0}^{z} t^{a-1} (1-t)^{b-1} dt.
$$
For the purpose of computing $\phi_i$, the function $\mathcal{B}_{5,2}(z)$ is inverted by the Newton method.
Again, the covariance function is $\cov_{\kappa}(x,y) = \exp\left(-\frac{\|x-y\|^2}{2 l_c^2} \right)$.

Since this distribution varies stronger than the log-normal one, for the computation of the probability \eqref{eq:rare_prob} we use larger $\tau=1.4$.
All other parameters are the same as in the experiments with the log-normal coefficient.
The performance of both TT and Sparse approach in case of the beta distribution is shown in Tables \ref{tab:beta-p}, \ref{tab:beta-M}, \ref{tab:beta-l},
\ref{tab:beta-R_l}, \ref{tab:beta-mc} for $p$, $M$, $l_c$, the spatial grid level and
the Monte Carlo tests, respectively.

We see the same behavior as in the log-normal case.
The only significant difference is the lower error of the Sparse method in the case $M=10$, $R_l=1$, which is 1.08e-3 for the beta distribution and 1.45e-1 for the log-normal one.

\begin{table}[t]
\centering
 \caption{Performance versus $p$, beta distribution}
 \label{tab:beta-p}
 \begin{tabular}{c|ccc|ccc|cc|cc|c}
  $p$ & \multicolumn{3}{c|}{CPU time, sec.} & $r_{\kappa}$ & $r_u$ & $r_{\hat \chi}$ & \multicolumn{2}{c|}{$E_{\kappa}$} & \multicolumn{2}{c|}{$E_{u}$} & $\mathbb{P}$ \\ \hline
      & TT       & Sparse & $\hat\chi$       &              &       &            &  TT & Sparse & TT & Sparse & TT  \\ \hline
  1   & 21.40  & 1.382  & 0.059  & 64   &   49  &     1  &  2.24e-3 &  5.13e-2 &    1.14e-2 &  2.50e-2 & 0       \\
  2   & 39.87  & 5.301  & 0.100  & 65   &   50  &     1  &  1.92e-4 &  5.50e-3 &    7.67e-4 &  1.28e-3 & 0       \\
  3   & 57.16  & 1000   & 70.98  & 65   &   50  &   445  &  9.07e-5 &  1.76e-3 &    5.01e-4 &  1.06e-3 & 1.88e-4     \\
  4   & 76.22  &  ---   & 21.18  & 65   &   50  &   416  &  8.81e-5 &      --- &    1.41e-4 &     ---  & 9.84e-5     \\
  5   & 100.6  &  ---   & 119.7  & 65   &   50  &   428  &  8.89e-5 &      --- &    1.10e-4 &     ---  & 1.23e-4     \\
 \end{tabular}
\end{table}

\begin{table}[t]
\centering
 \caption{Performance versus $M$, beta distribution}
 \label{tab:beta-M}
 \begin{tabular}{c|ccc|ccc|cc|cc|c}
  $M$ & \multicolumn{3}{c|}{CPU time, sec.} & $r_{\kappa}$ & $r_u$ & $r_{\hat \chi}$ & \multicolumn{2}{c|}{$E_{\kappa}$} & \multicolumn{2}{c|}{$E_{u}$} & $\mathbb{P}$ \\ \hline
       & TT     & Sparse & $\hat\chi$       &              &       &            &  TT & Sparse & TT & Sparse & TT  \\ \hline
  10   & 9.777  & 5.796  & 0.942     &     34  &  40 &   39 & 1.70e-4 &   1.65e-3  & 5.18e-4 &  1.08e-3 & 1.95e-4       \\
  15   & 26.46  & 90.34  & 25.16     &     50  &  48 &  374 & 1.03e-4 &   1.73e-3  & 4.96e-4 &  1.08e-3 & 1.94e-4       \\
  20   & 56.92  & 986.2  & 59.57     &     65  &  50 &  413 & 9.15e-5 &   1.80e-3  & 5.08e-4 &  1.09e-3 & 1.88e-4   \\
  30   & 156.7  & 55859  & 147.9     &     92  &  50 &  452 & 7.75e-5 &   7.01e-2  & 5.12e-4 &  4.53e-2 & 1.85e-4
 \end{tabular}
\end{table}

\begin{table}[t]
\centering
 \caption{Performance versus $l_c$, beta distribution}
 \label{tab:beta-l}
 \begin{tabular}{c|ccc|ccc|cc|cc|c}
  $l_c$ & \multicolumn{3}{c|}{CPU time, sec.} & $r_{\kappa}$ & $r_u$ & $r_{\hat \chi}$ & \multicolumn{2}{c|}{$E_{\kappa}$} & \multicolumn{2}{c|}{$E_{u}$} & $\mathbb{P}$ \\ \hline
       & TT      & Sparse & $\hat\chi$       &              &       &            &  TT & Sparse & TT & Sparse & TT  \\ \hline
  0.1   & 665.8  & 55923  & 0.91  &  90 &   48  &   1 & 8.7e-3 &  8.77e-3 &  7.9e-3  &  7.92e-3 &  0  \\
  0.3   & 2983   & 53783  & 1.49  & 177 &   74  &   1 & 1.5e-3 &  2.02e-3 &  1.2e-3  &  1.30e-3 &  0  \\
  0.5   & 1138   & 54297  & 100  & 132 &   74  & 403 & 1.5e-4 &  1.71e-3 &  2.9e-4  &  8.21e-4 &  2.47e-23  \\
  1.0   & 158.8  & 56545  & 153  &  92 &   50  & 463 & 7.8e-5 &  6.92e-2 &  5.1e-4  &  4.47e-2 &  1.96e-04  \\
  1.5   & 62.20  & 55848  & 89.5  &  75 &   42  & 409 & 6.9e-5 &  7.85e-2 &  8.3e-4  &  4.56e-2 &  2.20e-03  \\
 \end{tabular}
\end{table}

\begin{table}[t]
\centering
 \caption{Performance versus $R$, beta distribution}
 \label{tab:beta-R_l}
 \begin{tabular}{c|ccc|ccc|cc|cc|c}
  $\#$DoF & \multicolumn{3}{c|}{CPU time, sec} & $r_{\kappa}$ & $r_u$ & $r_{\hat \chi}$ & \multicolumn{2}{c|}{$E_{\kappa}$} & \multicolumn{2}{c|}{$E_{u}$} & $\mathbb{P}$ \\ \hline
      & TT     & Sparse & $\hat\chi$       &              &       &            &  TT & Sparse & TT & Sparse & TT  \\ \hline
  557   & 9.73  & 5.94  & 0.94   &  34 & 40 & 39   &   1.70e-4 &  1.65e-3 &  5.21e-4 &  1.08e-3  & 1.95e-4       \\
  2145  & 36.2  & 12.7  & 0.77   &  34 & 41 & 41   &   1.56e-4 &  1.64e-3 &  5.19e-4 &  1.08e-3  & 1.97e-4       \\
  8417  & 378  & 162  & 1.12   &  34 & 40 & 43   &   1.53e-4 &  1.62e-3 &  5.07e-4 &  1.06e-3  & 1.96e-4   \\
 \end{tabular}
\end{table}

\begin{table}[t]
\centering
 \caption{Verification of the Monte Carlo method, beta distribution}
 \label{tab:beta-mc}
 \begin{tabular}{c|l|cc|cc||c|c}
  $N_{mc}$ &  $T_{MC}$, sec.		& $E_{\bar u}$ & $E_{\var_u}$  & $\mathbb{P}_*$ & $E_{\mathbb{P}}$ & \multicolumn{2}{c}{TT results} \\  \hline
  $10^2$   &  0.9428                    & 9.12e-3   & 1.65e-1     & 0               & $\infty$ & $T_{solve}$ &  278.4014 sec. \\
  $10^3$   &  9.5612                    & 1.04e-3   & 6.04e-2     & 0               & $\infty$ & $T_{\hat\chi}$            &  179.4764 sec. \\
  $10^4$   &  8.849$\cdot 10^1$         & 4.38e-4   & 5.56e-3     & 0               & $\infty$ & $r_{\kappa}$              &  92           \\
  $10^5$   &  8.870$\cdot 10^2$         & 2.49e-4   & 3.06e-3     & 7.00e-5         & 6.80e-1  & $r_{u}$                   &  50     \\
  $10^6$   &  8.883$\cdot 10^3$         & 8.16e-5   & 8.56e-4     & 1.07e-4         & 9.94e-2  & $r_{\hat\chi}$  	      &  406    \\
           &                            &              &                &                   &              & $\mathbb{P}$             &  1.1765e-04
 \end{tabular}
\end{table}


\section{Conclusion}

We have developed the new block TT cross algorithm to compute the TT approximation of the polynomial chaos expansion of a random field with the tensor product set of polynomials, where the polynomial degrees are bounded individually for each random variable.
The random field can be given as a transformation of a Gaussian field by an arbitrary smooth function.
The new algorithm builds the TT approximation of the PCE in a black box manner.
Compared to the previously existing cross methods, the new approach assimilates all KLE terms simultaneously, which reduces the computational cost significantly.

The uncertain (diffusion) coefficient in the elliptic PDE is approximated via PCE.
We show that the tensor product polynomial set allows a very efficient construction of the stochastic Galerkin matrix in the TT format, provided the coefficient is precomputed in the TT format.
Interestingly, we can even compute the Galerkin matrix exactly by preparing the coefficient with two times larger polynomial orders than those employed for the solution.
In the TT format, we can store the Galerkin matrix in the dense form, since the number of the TT elements $\mathcal{O}(M p^2 r^2)$ is feasible for $p \sim 10$.
This also means that other polynomial families, such as the Chebyshev or Laguerre, may be used straightforwardly.

The Galerkin matrix defines a large linear system on the PCE coefficients of the solution of the stochastic equation.
We solve this system in the TT format via the alternating minimal energy algorithm and calculate the post-processing of the solution, such as the mean, variance and exceedance probabilities.

We demonstrate that with the new TT approach we can go to a larger number of random variables (e.g. $M=30$) used in the PCE (larger stochastic dimension)
and take a larger order of the polynomial approximation in the stochastic space ($p=5$) on a usual PC desktop computer.
For all stages of numerical experiments (computation of the coefficient, operator, solution and statistical functionals)
we report the computational times and the storage costs (TT ranks), and show that they stay moderate in the investigated range of parameters.

In particular, the TT ranks do not grow with the polynomial degree $p$.
This remains in sharp contrast to the traditional sparse polynomial approximation, where the total polynomial degree is bounded.
The cardinality of this sparse polynomial set grows exponentially with $p$, but the tensor product decomposition is not possible anymore.
This renders the total computational cost of the sparse PCE approach higher than the cost of the TT method.
Besides, the tensor product PCE is more accurate than the expansion in the sparse set due to a larger total polynomial degree.
Comparison with the classical Monte Carlo method shows that the TT methods can compute the exceedance probabilities more accurately,
since the TT format approximates the whole stochastic solution implicitly.

Several directions of research can be pursued in the future.

Currently, we store both the matrix and the inverted mean-field preconditioner in the dense form.
This imposes rather severe restrictions on the spatial discretization.
The spatial part of the Galerkin matrix must be dealt with in a more efficient way.

With the tensor product methods the stochastic collocation approach seems very attractive \cite{khos-pde-2010}.
We may introduce quite large discretization grids in each random variable $\theta_m$: additional data compression can be achieved with the QTT approximation \cite{khor-qtt-2011}.
It is important that the deterministic problems are decoupled in the stochastic collocation.
The cross algorithms can become an efficient non-intrusive approach to stochastic equations.

\subsection*{Acknowledgments}
We would like to thank Elmar Zander for his assistance and help in the usage of the Stochastic Galerkin library \emph{sglib}.

A. Litvinenko was supported by the SRI-UQ Center at King Abdullah University of Science and Technology.
A part of this work was done during his stay at Technische Universit\"at Braunschweig and was supported by the German DFG Project CODECS "Effective approaches and solution techniques for conditioning, robust design and control in the subsurface".

S. Dolgov was partially supported by RFBR grants  13-01-12061-ofi-m, 14-01-00804-A, RSCF grants 14-11-00806, 14-11-00659, and the Stipend of the President of Russia during his stay at the Institute of Numerical Mathematics of the Russian Academy of Sciences.

\end{document}